\newtheorem{theorem}{Theorem}[section]
\newtheorem{lemma}[theorem]{Lemma}
\newtheorem{remark}{Remark}
\def\Z{\mathbb{Z}}
\def\R{\mathbb{R}}
\newcommand{\Om}{\Omega}
\newcommand{\RR}{\mathbb{R}}
\newcommand{\eps}{{\varepsilon}}
\newcommand{\p}{\partial}
\newcommand{\beq}{\begin{equation}}
\newcommand{\eeq}{\end{equation}}
\newcommand{\RN}[1]{%
  \textup{\uppercase\expandafter{\romannumeral#1}}%
}
\numberwithin{equation}{section}
\numberwithin{figure}{section}
\begin{document}
\title{Bayesian optimization approach for tracking the location and orientation of a moving target using far-field data
	\thanks{\footnotesize
This work is supported by the National Research Foundation of Korea(NRF) grant funded by the Korea government(MSIT) (RS-2023-00242528, RS-2024-00359109).
}}

\date{}

\author{
Woojoo Lee\thanks{Department of Mathematical Sciences, Korea Advanced Institute of Science and Technology, 291 Daehak-ro, Yuseong-gu, Daejeon 34141, Republic of Korea (woojoo.lee@kaist.ac.kr, mklim@kaist.ac.kr).}\and
Mikyoung Lim\footnotemark[2]\and
Sangwoo Kang\thanks{Pusan National University, 2, Busandaehak-ro 63beon-gil, Geumjeong-gu, Busan 46241, Republic of Korea (sangwoo.kang@pusan.ac.kr)}
}

\maketitle

\begin{abstract}
We investigate the inverse scattering problem for tracking the location and orientation of a moving scatterer using a single incident field. We solve the problem by adopting the optimization approach with the objective function defined by the discrepancy in far-field data. We rigorously derive formulas for the far-field data under translation and rotation of the target and prove that the objective function is locally Lipschitz with respect to the orientation angle at the true angle. By integrating these formulas with the Bayesian optimization approach, we reduce the cost of objective function evaluations. For the instance of an unknown target, machine learning via fully connected neural networks is applied to identify the shape of the target. Numerical simulations for randomly generated shapes and trajectories demonstrate the effectiveness of the proposed method.
\end{abstract}

\noindent {\footnotesize {\bf Key words.} 
Inverse scattering problems, Tracking problems, Far-field data, Shape reconstruction, Bayesian optimization, Machine learning}

\section{Introduction}

Inverse scattering problems, which primarily aim to identify the geometric and material features of targets, have been extensively studied in various fields such as biomedical imaging \cite{Abubakar:2002:IBD}, nondestructive testing \cite{Marklein:2006:ISI, Salucci:2016:RTN}, and remote sensing \cite{Woodhouse:2017:IMR}. These problems intrinsically involve nonlinearity and ill-posedness. To overcome these difficulties, researchers have developed a variety of algorithms, which can be classified as iterative methods, decomposition methods, and sampling methods (refer to the survey in \cite{Luke:2003:NRT} for more details). 
Additionally, recent studies have explored novel techniques under limited measurement configurations, which are commonly encountered in real-life applications \cite{Kang:2022:MSM, Kang:2022:FIS, Huang:2022:BAI}.

In the case of moving targets, the main concern in the inverse scattering problems is to estimate the trajectories of the targets, which we refer to as the {\it tracking}. The tracking problem of moving targets has gained significant attention in practical fields such as autonomous driving \cite{Leon:2021:RTT}, robotics \cite{Robin:2016:MRT}, and radar imaging \cite{Wang:2012:MRI, Haworth:2007:DTM}. A major challenge in the tracking is that the target's motion is unpredictable. Various techniques have been developed depending on the model configuration. For instance, Cheney and Borden applied the linearized imaging theory for scattered waves to design a tracking method for a moving point source in the scalar wave equation \cite{Cheney:2008:IMT}. 
For the same problem in $\mathbb{R}^3$, Wang et al. established an algorithm to track both the location of the source and the intensity of the signal by employing Bayesian inference during the tracking process \cite{Wang:2023:LMS}. In addition, in the context of the conductivity problem, Ammari et al. investigated the tracking from the expansion coefficients of multipole expansions \cite{Ammari:2013:TMT}.

\begin{figure}[h!]
	\centering
	\includegraphics[width=0.26\textwidth]{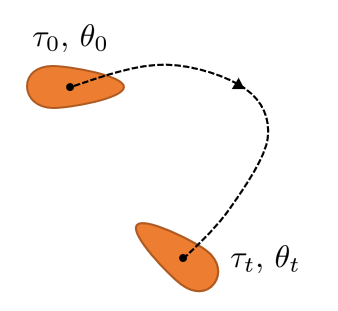}
	\caption{\label{fig:tracking} Example of tracking. We estimate the location $\tau_t$ and orientation $\theta_t$ at time $t$ from the far-field data at time $t$. The movement direction and shape orientation are considered independent.}
\end{figure}
  
In this paper, we address the tracking problem of identifying both the location and orientation of moving targets in two dimensions, where orientation refers to the rotation angle of the targets. We assume that the target scatterer satisfies the rigid motion over consecutive time intervals, where the shape of the target remains fixed, and its location and orientation change over time. We allow the movement direction and shape orientation to be independent of each other. This significantly increases the complexity of the tracking problem compared to cases where the movement direction and orientation coincide. Specifically,  we consider a sound-soft scatterer in two dimensions that moves in rigid motions. Our objective is to track both its location and orientation along its trajectory using the far-field data obtained from only a single incident field (see Figure \ref{fig:tracking}).

We adopt an optimization approach to retrieve the location and orientation angle of the target, where the objective function is defined by the discrepancy in two consecutive far-field data. We note that using only one incident field direction increases the difficulty in the inverse scattering problem. Moreover, the rotation angle lacks a one-to-one correspondence with the far-field data (see, for example, Remark \ref{rmk:nonuniqueness}). We address these challenges by investigating the properties of the far-field data with respect to rigid motions of the target. We derive the translation formula explicitly stating the relation between the shift of the target and the far-field data. We also rigorously show the local Lipschitz property of the objective function at the true angle.

To enhance the efficiency of our method, we incorporate Bayesian optimization, which is crucial due to the high computational cost of evaluating the objective function (see subsection \ref{subsec:framework:method}).  By employing the Bayesian optimization, we reduce the number of its evaluations, as it selects efficient sample evaluation points.  For additional applications of Bayesian optimization in inverse problems, readers may refer to \cite{Huang:2021:BOF, Vargas-Hernandes:2019:BOI, Hammerschmidt:2018:SIP}.

In addition to tracking the target, when the shape of the targets are unknown, we also propose a method to find the shape of the target via artificial neural networks. We parametrize the boundary of the target and estimate the parameters by learning a data-to-shape map. We apply a simple fully connected neural network (FCNN), following the approach in \cite{Gao:2022:ANN}. We remark that fine-tuned deep neural networks are recently introduced for solving inverse scattering problems \cite{Gao:2022:ANN, Guo:2022:PED, Chen:2020:ROD}.

Our devised approach tracks small and extended targets of general shapes with trajectories that may evolve randomly.  It is noteworthy that we use the training data only at the initial time to recover the target’s shape. If the shape of the target is known or well-identified, efficient tracking can be achieved using only far-field data of the moving target without further training data. We validate the proposed method and its performance through simulations using synthetic data with random trajectories, corrupted by random noise. 

The remainder of this paper is organized as follows. In section \ref{sec:ff}, we present the formulation of the tracking problem. In section \ref{sec:theory}, we establish the translation formula and the rotation formula for far-field data, which play the key roles in our work, and we verify the local Lipschitz property for the far-field data. Section \ref{sec:bo} describes a Bayesian optimization scheme for detecting the rotation angle of a target, and section \ref{sec:framework} outlines the proposed tracking algorithm for the moving target. Numerical experiments are conducted in section \ref{sec:num}. Section \ref{sec:fin} concludes the paper with a brief discussion.

\section{Problem formulation} \label{sec:ff}
We consider the scattering problem in two dimensions with a sound-soft scatterer $\Omega$ with a fixed wavenumber $k$:
\begin{align} 
\begin{cases} \label{prob:scattering}
\Delta u +k^2 u=0, & \text{in } \ \R^2\backslash \overline{\Omega},\\
u=0& \text{on } \ \p \Omega,\\
\lim_{r\to\infty} \sqrt{r}\left(\dfrac{\p u^s}{\p r}-iku^s\right)=0 & \mbox{with }r=|x|,
\end{cases}
\end{align}
where $u=u^i+u^s$ is the total field, $u^i(x)=e^{ikx\cdot d}$ the incident plane wave, and $u^s$ the scattered field. 
Our objective is to investigate the inverse problem of simultaneously tracking both the location and orientation of a moving target $\Om$ from far-field measurement data. We denote the target as $\Omega_t$ with time variable $t\geq 0$. Assuming the far-field pattern is measured at discrete time points for only a single incident field, either in full or limited view, our goal is to recover the target $\Omega_t$ at each discrete time point $t$.
In the remainder of this section, we introduce the definition and layer potential technique for the far-field pattern.

We assume that $\Omega$ is a simply-connected, bounded domain with a $C^2$ boundary. 
The fundamental solution of the two-dimensional Helmholtz equation in \eqref{prob:scattering} is given by
\[
\Phi(x,y) :=\frac{i}{4}H_0^{(1)}(k|x-y|),
\]
where $H_0^{(1)}$ is the Hankel function of the first kind of order $0$. By Green's formula, the scattered field $u^s$ can be expressed as
\begin{align} \label{eqn:u^s:orig}
u^s(x)\,=\, \int_{\p\Omega} \bigg[\frac{\p \Phi(x,y)}{\p n(y)} - i\eta \Phi(x,y) \bigg] \phi(y)\,ds(y), \quad  x \in \R^2 \backslash \p \Omega,
\end{align} 
where $n(y)$ is the outward normal vector to $\p\Omega$ at $y$, $\phi$ is a density function on $\p\Om$, and $\eta$ is a fixed nonzero real parameter.

Let $C(\p\Om)$ denote the space of continuous functions on $\p\Om$. We define the single-layer potential $S_\Omega:C(\p \Omega)\to C(\p \Omega)$ by
\begin{align} 
\notag (S_\Omega\phi)(x) & = \int_{\p \Omega} \Phi(x,y)\phi(y)\,ds(y)  \\ 
& = \frac{i}{4}\int_{\p \Omega} H_0^{(1)}(k|x-y|)\phi(y) \, ds(y) \label{eqn:slo}
\end{align}
and the integral operator $K_\Omega:C(\p \Omega)\to C(\p \Omega)$ by
\begin{align} 
\notag (K_\Omega\phi)(x) &= \int_{\p \Omega} \frac{\p \Phi(x,y)}{\p n(y)}\phi(y)\,ds(y)\\
& = \frac{ik}{4}\int_{\p \Omega} H_1^{(1)}(k|x-y|)\frac{\langle x-y, n(x)\rangle}{|x-y|}\phi(y) \, ds(y). \label{eqn:dlo}
\end{align}
Applying the jump relations for the layer potential operators to \eqref{eqn:u^s:orig}, one can easily derive 
\begin{align} \label{eqn:u^s:lp}
u^s(x)\,=\, \left(\frac{1}{2}I +K_\Omega - i\eta S_\Omega \right)\phi, \quad x \in \p\Omega.
\end{align}
By the boundary condition in \eqref{prob:scattering}, this reduces to the integral equation:
\begin{align} \label{eqn:density}
\left(\frac{1}{2}I +K_\Omega - i\eta S_\Omega \right)\phi \,=\, -u^i \quad \text{ on }  \p\Omega,
\end{align}
where $u^i =e^{ikx\cdot d}$. We note that \eqref{eqn:density} has a unique solution $\phi \in C(\p \Omega)$ (see \cite[Theorem 3.11]{Colton:1998:IAE}). For the direct problem, the scattered field $u^s$ corresponding to a given incoming field $u^i$ can be obtained by first solving $\phi$ in \eqref{eqn:density} and then evaluating the right-hand side in \eqref{eqn:u^s:lp}. 

\smallskip
\begin{remark} \rm
The discussion above extends naturally to Sobolev spaces. It is well known that the layer potential operators $S:H^{-1/2}(\p \Omega)\to H^{1/2}(\p \Omega)$ and $K:H^{1/2}(\p \Omega)\to H^{1/2}(\p \Omega)$ are bounded. The integral equation \eqref{eqn:density} has a unique solution $\phi \in H^{1/2}(\p \Omega)$, and the solution $u$ to problem \eqref{prob:scattering} lies in $H_{\text{loc}}^1(\R^2 \backslash \overline{\Omega})$. For further details, readers may refer to \cite{Colton:1998:IAE}. 
	\end{remark}

The scattered field $u^s$ has an asymptotic expansion in terms of the far-field pattern $u^\infty$ as
\beq\label{def:ff}
u^{s}(x) \, = \, \frac{e^{ik|x|}}{\sqrt{|x|}}\bigg[ u^{\infty}(\hat{x}) +O\bigg(\frac{1}{|x|} \bigg)\bigg], \quad \ |x| \to \infty,
\eeq
uniformly in all directions $\hat{x} =x/|x|\in S^1$, where $S^1=\{y\in\RR^2\,|\,\|y\|=1\}$.
By taking $|x|\to \infty$ in \eqref{eqn:u^s:lp}, we have
\begin{align}
u^s(x)\,&=\, \frac{e^{i\pi/4}}{\sqrt{8\pi k}}\int_{\p\Omega} \bigg[\frac{\p}{\p n(y)}\frac{e^{ik|x-y|}}{\sqrt{|x-y|}} - i\eta \frac{e^{ik|x-y|}}{\sqrt{|x-y|}} \bigg] \phi(y)\,ds(y) \notag \\
& = \, \frac{e^{ik|x|}}{\sqrt{|x|}}\left( \frac{e^{i\pi/4}}{\sqrt{8\pi k}} \int_{\p\Omega} \bigg[\frac{\p e^{-ik\hat{x}\cdot y}}{\p n(y)} - i\eta e^{-ik\hat{x}\cdot y}  \bigg] \phi(y)\,ds(y) +O\bigg(\frac{1}{|x|} \bigg) \right) \notag
\end{align}
and, by \eqref{def:ff},
\begin{align}
u^{\infty}(\hat{x}) \, & = \, \frac{e^{i\pi/4}}{\sqrt{8\pi k}}\int_{\p\Omega} \bigg[\frac{\p e^{-ik\hat{x}\cdot y}}{\p n(y)} - i\eta e^{-ik\hat{x}\cdot y} \bigg] \phi(y)\,ds(y) \notag \\
&= \, \frac{e^{-i\pi/4}}{\sqrt{8\pi k}}\int_{\p\Omega} \big(k\hat{x} \cdot n(y) +\eta\big)e^{-ik\hat{x}\cdot y} \phi(y)\,ds(y). \label{eqn:ff:orig} 
\end{align}
We may denote the far-field pattern as $u_\Om^\infty(\hat{x};\hat{d})$ to explicitly indicate the corresponding scatterer $\Om$ and the incident field direction $d$. Similarly, we may denote the density function satisfying \eqref{eqn:density} as $\phi_d$.

\section{Analysis on the far-field pattern under rigid motions} \label{sec:theory}
In two-dimensional Euclidean space, rigid motions consist of translations and rotations. 
We define the translated scatterer as $\Omega+\tau := \{z+\tau \,|\, z \in \Omega \}$ for $\tau \in \R^2$ and
the rotated scatterer as $R_\theta\Om:=\{R_\theta z\,|\, z\in\Om\}$ for $\theta \in [0,2\pi)$, where $R_\theta z$ denotes the rotation of $z$ by the angle $\theta$ around the origin. 

We assume that the scatterer at time $t$, denoted as $\Om_t$, is a rigid motion of the scatterer at the initial time. Our objective is to identify the translation vector $\tau$ and rotation (or orientation) angle $\theta$ for each $t$ satisfying 
\beq\label{Omt:rigid}\Om_t\approx R_\theta \Om_0 +\tau,
\eeq
under the assumption that the initial scatterer $\Om_0$ contains the origin. For that purpose, we design an algorithm to find $\tau$ and $\theta$ such that 
\beq\label{Omt:ff}
u^\infty_{\Om_t}(\hat{x};d)\approx u^\infty_{R_\theta \Om_0 +\tau}(\hat{x};d)
\eeq
for a fixed $d$ and various values of $\hat{x}$. In this section, we derive key formulas for the far-field patterns with respect to translation and rotation. These formulas will later be used to develop algorithms to identify the location and rotation angle of the scatterer, where the rotation angle is relative to a reference point contained in the scatterer.

\begin{remark}\label{rmk:nonuniqueness}
We note that the rotation angle $\theta$ satisfying \eqref{Omt:rigid} may not be unique. For example, if $\Om_0$ is a disk centered at the origin, then it remains invariant under the rotation $R_{\theta}$ for any angle $\theta$.
Consequently, the far-field data of $\Om_t$ also lacks uniqueness in determining the rotation angle. 
\end{remark}
The non-uniqueness in $\theta$ satisfying \eqref{Omt:ff} presents a challenge in identifying the rotation angle. This issue arises particularly when the scatterer has certain symmetrical properties, such as a disk centered at the origin. We develop an algorithm designed to track the rotation angle under the assumption that the rotation angle between two consecutive time points is constrained within a limited interval range and the scatterer is not symmetric under rotations $R_\theta$ for small angles $\theta$.

\subsection{Translation and rotation formulas of the far-field pattern} \label{subsec:theory:rigid}
 
We analyze the changes in the far-field pattern resulting from translations and rotations of the scatterer $\Om$. 
First, one can easily derive the translation formula as follows (see, for instance, \cite[Section 5.1]{Colton:1998:IAE}). For the reader's convenience, we provide the proof using the layer potential technique (refer to \cite{Ammari:2014:GPT} for the conductivity problems). 
\begin{lemma}\label{thm:translation}
Fix $\tau\in\RR^2$. For the incident field $u^i(z)=e^{ikz\cdot d}$ with the direction $d\in S^1$, the far-field pattern of $\Omega+\tau$ is given by
\begin{align} \label{eqn:translation}
u^\infty_{\Omega+\tau}(\hat x;d) \, =\, e^{-ik\tau\cdot (\hat{x}-d)}u^\infty_{\Omega}(\hat x;d)\quad\mbox{for all directions }\hat{x}\in S^1. 
\end{align}
\end{lemma}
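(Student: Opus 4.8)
The plan is to exploit the translation invariance of the boundary integral operators together with the explicit far-field representation \eqref{eqn:ff:orig}. Write $\phi_d$ for the density on $\p\Om$ solving \eqref{eqn:density} with incident direction $d$, and $\psi_d$ for the corresponding density on $\p(\Om+\tau)$. The first step is to record how $S_\Om$ and $K_\Om$ transform under the translation $y\mapsto y+\tau$: since $|(x+\tau)-(y+\tau)|=|x-y|$, the outward normal at $y+\tau\in\p(\Om+\tau)$ equals the outward normal at $y\in\p\Om$, and the arclength element is preserved, one obtains $(S_{\Om+\tau}\psi)(x+\tau)=(S_\Om\widetilde\psi)(x)$ and $(K_{\Om+\tau}\psi)(x+\tau)=(K_\Om\widetilde\psi)(x)$ for $x\in\p\Om$, where $\widetilde\psi(x):=\psi(x+\tau)$.

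Next, I would evaluate the integral equation \eqref{eqn:density} for $\Om+\tau$ at the shifted point $x+\tau$. Its right-hand side is $-e^{ik(x+\tau)\cdot d}=-e^{ik\tau\cdot d}\,e^{ikx\cdot d}$, so by the previous step the function $\widetilde{\psi_d}$ solves $\left(\frac12 I+K_\Om-i\eta S_\Om\right)\widetilde{\psi_d}=-e^{ik\tau\cdot d}\,u^i$ on $\p\Om$. By the uniqueness of the solution of \eqref{eqn:density} (linearity in the data), this forces $\widetilde{\psi_d}=e^{ik\tau\cdot d}\,\phi_d$, that is, $\psi_d(y)=e^{ik\tau\cdot d}\,\phi_d(y-\tau)$ for $y\in\p(\Om+\tau)$.

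Finally, I would substitute this density relation into \eqref{eqn:ff:orig} written for $\Om+\tau$ and change variables $y=z+\tau$ in the boundary integral; using once more that $\hat x\cdot n(z+\tau)=\hat x\cdot n(z)$ and $e^{-ik\hat x\cdot(z+\tau)}=e^{-ik\hat x\cdot\tau}\,e^{-ik\hat x\cdot z}$, the scalar factor $e^{ik\tau\cdot d}\,e^{-ik\hat x\cdot\tau}=e^{-ik\tau\cdot(\hat x-d)}$ factors out of the integral, and what remains is exactly $u^\infty_{\Om}(\hat x;d)$. This yields \eqref{eqn:translation}. The only genuine obstacle is the bookkeeping in the first step — verifying that the kernels of $S_\Om$ and $K_\Om$, including the normal vector and the surface measure, are truly invariant under the rigid translation; once that is settled, the remainder is a change of variables plus the uniqueness statement for \eqref{eqn:density}.
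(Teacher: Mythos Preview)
Your proposal is correct and follows essentially the same route as the paper: translation invariance of $S_\Om$ and $K_\Om$, uniqueness in \eqref{eqn:density} to identify the shifted density as $e^{ik\tau\cdot d}$ times the original one, and then the change of variables $y=z+\tau$ in \eqref{eqn:ff:orig} to extract the phase $e^{-ik\tau\cdot(\hat x-d)}$. The only cosmetic difference is that the paper pushes the density forward from $\p\Om$ to $\p(\Om+\tau)$ whereas you pull it back, but the content is identical.
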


\begin{proof}
For $z \in\p \Omega$ and $\phi \in C(\p \Omega)$, let $z^\tau=z+\tau$ and define $\phi^\tau\in C(\p(\Omega+\tau))$ by $\phi^\tau(z^\tau)=\phi(z)$. Using \eqref{eqn:slo}, \eqref{eqn:dlo}, and the translational invariance of the inner product, we observe that
\[S_{\Omega+\tau}[\phi^\tau](z^\tau) = S_\Omega[\phi](z)  \quad \text{ and } \quad K_{\Omega+\tau}[\phi^\tau](z^\tau) = K_\Omega[\phi](z). \]
Thus, given that $\phi$ is a solution to equation \eqref{eqn:density} on $\p \Omega$ with the incident field of direction $d$, it holds that for $z\in\p\Om$,
\begin{align*}
&\left(\frac{1}{2}I +K_{\Omega+\tau} - i\eta S_{\Omega+\tau} \right)[\phi^\tau](z^\tau) 
=\left(\frac{1}{2}I +K_{\Omega} - i\eta S_{\Omega} \right)[\phi](z)=-u^i(z)= -e^{ikz^\tau\cdot d}\, e^{-ik\tau\cdot d}.
\end{align*}
This implies that $e^{ik\tau\cdot d}\phi^\tau$ is the solution to \eqref{eqn:density} on $\p(\Omega+\tau)$. 
By applying \eqref{eqn:ff:orig} for $\Om+\tau$ and setting $y=z^\tau=z+\tau$ with $z\in\p \Om$, we prove \eqref{eqn:translation}.
\end{proof}

Next, we also derive the rotation formula by the layer potential technique:
\begin{lemma}\label{thm:rotation}
Fix $\theta\in S^1$. 
For the incident field $u^i(z)=e^{ikz\cdot d}$ with the direction $d\in S^1$, the far-field pattern of $R_\theta\Om$ is given by
\begin{align} \label{eqn:rotation}
u^\infty_{R_\theta \Omega}(\hat x;d) \, =\, u^\infty_{\Omega}(R_{-\theta}\hat x;R_{-\theta}d). 
\end{align}
\end{lemma}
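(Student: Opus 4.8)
The plan is to follow the proof of Lemma~\ref{thm:translation}, replacing the translational invariance used there by the rotational invariance of the fundamental solution and of the layer potential operators. Fix $\theta$, write $z^\theta:=R_\theta z$ for $z\in\p\Om$, and for $\phi\in C(\p\Om)$ define $\phi^\theta\in C(\p(R_\theta\Om))$ by $\phi^\theta(z^\theta)=\phi(z)$. The first step is to verify that $S_\Om$ and $K_\Om$ intertwine with this rotation, namely
\[
S_{R_\theta\Om}[\phi^\theta](z^\theta)=S_\Om[\phi](z)\quadand K_{R_\theta\Om}[\phi^\theta](z^\theta)=K_\Om[\phi](z).
\]
This follows from the identities $|R_\theta x-R_\theta y|=|x-y|$ and $\langle R_\theta a,R_\theta b\rangle=\langle a,b\rangle$, the fact that the outward unit normal to $\p(R_\theta\Om)$ at $R_\theta y$ is $R_\theta n(y)$, and the invariance of the arc-length element; inserting these into the kernels \eqref{eqn:slo} and \eqref{eqn:dlo} and changing variables $y\mapsto R_{-\theta}y$ in the integrals over $\p(R_\theta\Om)$ gives the two relations.

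Next I would track how the incident data transforms. Since $R_\theta$ is orthogonal, $z\cdot d=(R_\theta z)\cdot(R_\theta d)$, so $e^{ikz\cdot d}=e^{ikz^\theta\cdot (R_\theta d)}$. Consequently, if $\phi$ denotes the density solving \eqref{eqn:density} on $\p\Om$ for the incident direction $R_{-\theta}d$, then the two intertwining relations yield
\begin{align*}
\left(\tfrac{1}{2} I+K_{R_\theta\Om}-i\eta S_{R_\theta\Om}\right)[\phi^\theta](z^\theta)
&=\left(\tfrac{1}{2} I+K_{\Om}-i\eta S_{\Om}\right)[\phi](z)\\
&=-e^{ikz\cdot(R_{-\theta}d)}=-e^{ikz^\theta\cdot d},
\end{align*}
so $\phi^\theta$ is precisely the density associated with $R_\theta\Om$ and incident direction $d$; by uniqueness of the solution to \eqref{eqn:density} this identification is unambiguous. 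In other words, the density $\phi_d$ for $R_\theta\Om$ equals $(\phi_{R_{-\theta}d})^\theta$, where $\phi_{R_{-\theta}d}$ is the density for $\Om$ with direction $R_{-\theta}d$.

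Finally I would substitute this density into the far-field representation \eqref{eqn:ff:orig} written for $R_\theta\Om$ and change variables $y=R_\theta w$, $w\in\p\Om$. Using $\hat x\cdot(R_\theta w)=(R_{-\theta}\hat x)\cdot w$, $\hat x\cdot n(R_\theta w)=(R_{-\theta}\hat x)\cdot n(w)$, the invariance $ds(R_\theta w)=ds(w)$, and the density identity above, the integral defining $u^\infty_{R_\theta\Om}(\hat x;d)$ transforms term by term into the integral defining $u^\infty_\Om(R_{-\theta}\hat x;R_{-\theta}d)$, which is \eqref{eqn:rotation}.

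The computations are all elementary; the only points requiring care are the orientation bookkeeping — ensuring that $R_{-\theta}$, and not $R_\theta$, appears in both arguments of $u^\infty_\Om$ on the right-hand side — and the geometric fact that rotating the domain rotates its outward normal by the same angle. Once the intertwining relations for $S_\Om$ and $K_\Om$ are established, the rest is just a change of variables in the boundary integrals.
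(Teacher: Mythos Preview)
Your proof is correct and follows essentially the same approach as the paper's own argument: establish the intertwining relations for $S_\Om$ and $K_\Om$ under rotation, deduce that the density for $R_\theta\Om$ with direction $d$ is the rotated density for $\Om$ with direction $R_{-\theta}d$, and then change variables in the far-field representation \eqref{eqn:ff:orig}. You have in fact spelled out more of the geometric bookkeeping (the transformation of the normal, the arc-length invariance, and the explicit identities $\hat x\cdot R_\theta w=(R_{-\theta}\hat x)\cdot w$) than the paper does.
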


\begin{proof}
For $z \in \p\Omega$ and $\phi \in C(\p \Omega)$, let $z^\theta=R_\theta z$ and define $\phi^\theta$ the continuous function on $\p(R_\theta \Omega)$ given by $\phi^\theta(z^\theta)=\phi(z)$. From \eqref{eqn:slo}, \eqref{eqn:dlo} and the rotational invariance of the inner product, we observe that
\[S_{R_\theta \Omega}[\phi^\theta](z^\theta) = S_\Omega[\phi](z)  \quad \text{ and } \quad K_{R_\theta\Omega}[\phi^\theta](z^\theta) = K_\Omega[\phi](z). \]
 Thus, given that $\phi$ is a solution to equation \eqref{eqn:density} on $\p \Omega$ with the incident field of direction $R_{-\theta} d$, that is, $u^i(z)=e^{ikz\cdot R_{-\theta}d}$, it holds that for $z\in\p\Om$,
\begin{align*}
&\left(\frac{1}{2}I +K_{R_\theta \Omega} - i\eta S_{R_\theta \Omega} \right)[\phi^\theta](z^\theta) 
= \left(\frac{1}{2}I +K_{ \Omega} - i\eta S_{ \Omega} \right)[\phi](z)
= -u^i(z)=-e^{ikz^\theta\cdot d}.
\end{align*}
This implies that $\phi^\theta$ is a solution to the equation \eqref{eqn:density} on $\p(R_\theta \Omega)$ with the incident field of direction $d$. Hence, from \eqref{eqn:ff:orig} for $R_\theta \Omega$ with letting $y=x^\theta$, we obtain the rotation formula \eqref{eqn:rotation}.
\end{proof}
Lemma \ref{thm:translation} and Lemma \ref{thm:rotation} are crucial for recovering the location and orientation of the moving scatterer.

\subsection{Stability of far-field patterns in rotation} \label{subsec:theory:stab}

We demonstrate the Lipschitz stability for the far-field pattern with respect to the rotation angle. This is essential for understanding the convergence behavior of the Bayesian optimization approach to infer the orientation angle of the scatterer in Section \ref{subsec:inf:angle}   
	
\begin{theorem} \label{thm:angle_stability}
Let $d \in S^1$ denote the direction of the incident field. There exists a constant $C=C(\Om,k,d)$ 
such that
\begin{align} \label{ineq:angle_stability}
\left| u_{\Omega}^\infty(\hat{x};d) - u_{R_\theta \Omega}^\infty(\hat{x};d) \right|
\leq C |\theta|  \quad\mbox{for } |\theta|\ll 1.
\end{align}
\end{theorem}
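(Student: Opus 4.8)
The starting point is the rotation formula \eqref{eqn:rotation} of Lemma \ref{thm:rotation}, which rewrites the quantity in \eqref{ineq:angle_stability} as
\[
u_{\Om}^\infty(\hat{x};d) - u_{R_\theta \Om}^\infty(\hat{x};d) \,=\, u^\infty_{\Om}(\hat{x};d) - u^\infty_{\Om}(R_{-\theta}\hat x;R_{-\theta}d).
\]
Since $|R_{-\theta}v - v| = 2|\sin(\theta/2)| \le |\theta|$ for every $v \in S^1$, it suffices to prove that the map $(\hat y,e)\mapsto u^\infty_\Om(\hat y;e)$ is Lipschitz on $S^1\times S^1$ with a constant depending only on $\Om$, $k$ (and the fixed parameter $\eta$). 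Then \eqref{ineq:angle_stability} follows with $C$ equal to twice that Lipschitz constant — in fact uniformly in $\hat x$ and $d$, which is slightly stronger than stated. I would prove the joint Lipschitz estimate by splitting the increment into the $\hat y$-part and the $e$-part, using the integral representation \eqref{eqn:ff:orig} together with the boundary integral equation \eqref{eqn:density}.

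For the dependence on the observation direction, observe that in \eqref{eqn:ff:orig} the density $\phi_e$ does not depend on $\hat y$, while the factor $\bigl(k\hat y\cdot n(z)+\eta\bigr)e^{-ik\hat y\cdot z}$ is smooth in $\hat y$ with $z$-derivatives bounded uniformly over the bounded $C^2$ curve $\p\Om$. A mean value estimate then yields
\[
\bigl|u^\infty_\Om(\hat y_1;e) - u^\infty_\Om(\hat y_2;e)\bigr| \,\le\, C_1\,|\p\Om|\,\|\phi_e\|_{C(\p\Om)}\,|\hat y_1-\hat y_2|,
\]
with $C_1=C_1(k,\eta,\Om)$ and $|\p\Om|$ the arclength of $\p\Om$. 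For the dependence on the incident direction I would use \eqref{eqn:density}: writing $A := \tfrac12 I + K_\Om - i\eta S_\Om$, which is boundedly invertible on $C(\p\Om)$ by the unique solvability stated after \eqref{eqn:density} (see \cite[Theorem 3.11]{Colton:1998:IAE}), we have $\phi_e = -A^{-1}\bigl(e^{ik\,\cdot\,e}|_{\p\Om}\bigr)$. Since $|e^{ika}-e^{ikb}|\le k|a-b|$, the map $e\mapsto e^{ik\,\cdot\,e}|_{\p\Om}$ is Lipschitz from $S^1$ into $C(\p\Om)$ with constant $k\max_{z\in\p\Om}|z|$, and because $\|e^{ik\,\cdot\,e}|_{\p\Om}\|_{C(\p\Om)}=1$ we also obtain the uniform bound $\|\phi_e\|_{C(\p\Om)}\le\|A^{-1}\|$ needed above. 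Hence $e\mapsto\phi_e$ is Lipschitz into $C(\p\Om)$, and inserting this into \eqref{eqn:ff:orig} bounds $\bigl|u^\infty_\Om(\hat y;e_1) - u^\infty_\Om(\hat y;e_2)\bigr|$ by $C_2\,|e_1-e_2|$ with $C_2=C_2(k,\eta,\Om)$.

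Combining the two estimates gives the joint Lipschitz bound $\bigl|u^\infty_\Om(\hat y_1;e_1) - u^\infty_\Om(\hat y_2;e_2)\bigr| \le C'\bigl(|\hat y_1-\hat y_2| + |e_1-e_2|\bigr)$, and specializing to $\hat y_1=\hat x$, $\hat y_2=R_{-\theta}\hat x$, $e_1=d$, $e_2=R_{-\theta}d$ produces \eqref{ineq:angle_stability}. The step requiring the most care is the Lipschitz dependence of the density $\phi_e$ on $e$: one must fix the functional-analytic framework (continuous densities, or $H^{1/2}(\p\Om)$ if preferred) so that $A^{-1}$ is bounded and the right-hand side of \eqref{eqn:density} depends Lipschitz-continuously on $e$ in that norm, and one must track all constants so that they remain independent of $\hat x$, $d$, and $\theta$. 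The remaining estimates are routine bounds on smooth integrands over a fixed bounded curve.
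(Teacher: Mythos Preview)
Your argument is correct and follows essentially the same route as the paper: reduce via the rotation formula \eqref{eqn:rotation}, then control the $\hat x$-variation directly through the smooth integrand in \eqref{eqn:ff:orig} and the $d$-variation through the bounded inverse $\bigl(\tfrac12 I + K_\Om - i\eta S_\Om\bigr)^{-1}$ applied to the Lipschitz right-hand side of \eqref{eqn:density}. The paper carries out the same two-part split with explicit elementary inequalities (and an auxiliary Lemma~\ref{lem:angle_stability} for the $\hat x$-part), whereas you package the estimate as a joint Lipschitz bound for $(\hat y,e)\mapsto u_\Om^\infty(\hat y;e)$ and then specialize; your formulation also yields a constant independent of $d$ (via $\|\phi_e\|_{C(\p\Om)}\le\|A^{-1}\|$), which is a mild sharpening of the stated $C=C(\Om,k,d)$.
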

	
\begin{proof}
By \eqref{eqn:ff:orig} and \eqref{eqn:rotation}, we estimate
\begin{align}
& \notag \left| u_{\Omega}^\infty(\hat{x};d) - u_{R_\theta \Omega}^\infty(\hat{x};d) \right| 
=\left| u_{\Omega}^\infty(\hat{x};d) - u_\Omega^\infty(R_{-\theta}\hat{x}, R_{-\theta}d) \right| \\\label{eqn:estm:ff} 
& \ =  \frac{1}{\sqrt{8\pi k}}\bigg|\int_{\p\Omega} \left[\big(k\hat{x} \cdot n(y) +\eta\big)e^{-ik\hat{x}\cdot y} \phi_d(y) - \big(kR_{-\theta}\hat{x} \cdot n(y) +\eta\big)e^{-ikR_{-\theta}\hat{x}\cdot y} \phi_{R_{-\theta}d}(y) \right] \,ds(y)\bigg|.
\end{align}

For all $\alpha, \beta \in \R$, we have
\begin{align}\label{ineq:estm:exp}
|e^{i\alpha} - e^{i\beta}| & \leq  |\cos \alpha - \cos \beta|  + |\sin \alpha - \sin \beta|\leq  2|\alpha - \beta|.
\end{align}
Let $x\in\p\Om$ be given by $x = \|x\|(\cos s, \sin s)$ for some $s\in\RR$ and $y=(y_1,y_2)\in \RR^2$, then 
\(R_{-\theta}x = \|x\|(\cos(s-\theta), \sin(s-\theta))\)
and, thus,
\begin{align}\label{ineq:estm:dot}
 |{x}\cdot y - R_{-\theta}{x}\cdot y| & \leq \|x\|\big(\left|y_1(\cos s - \cos(s-\theta))\right| +   \left|y_2(\sin s - \sin(s-\theta))\right|\big)\leq \|x\| \|y\|_1 |\theta|
\end{align}
where $\|y\|_1=|y_1|+|y_2|$.

By \eqref{ineq:estm:exp} and \eqref{ineq:estm:dot} with $y=d\in S^1$, we have
\begin{align}\label{ineq:estm:key} 
 |e^{ikx\cdot d} - e^{ikx\cdot R_{-\theta}d}|  \leq 2k\left|x\cdot d - R_{-\theta}x \cdot d\right|  
 &\leq  2\sqrt{2}k\|x\| \|d\|_1 |\theta| \leq \eps(\theta),
\end{align}
where \(\eps(\theta) := 2\sqrt{2}kM_\Om |\theta|\) with \(M_\Omega = \max\{\|z\| \,:\, z \in \p \Omega \}\). 
The density functions $\phi_{d}$ and $\phi_{R_{-\theta}d}$ are solutions to \eqref{eqn:density} for $u^i(x) = e^{ikx\cdot d}$ and $u^i(x) = e^{ikx\cdot R_{-\theta}d}$, respectively. 
By the boundedness properties of the inverse of the operator $(\frac{1}{2}I+K_\Om-i\eta S_\Om)^{-1}:C(\p\Om)\rightarrow C(\p\Om)$  (see p.57 and Section 3.5 in \cite{Colton:1998:IAE}) and \eqref{ineq:estm:key}, there exists a constant $C$ depending on $\Om$ such that
\begin{align}\label{ineq:estm:density}
\| \phi_d - \phi_{R_{-\theta}d} \|_{L^\infty(\p\Om)} \leq C \eps(\theta)\quad\mbox{for }|\theta|\ll 1.
\end{align}

From \eqref{eqn:estm:ff} and \eqref{ineq:estm:density}, when $|\theta|$ is closed to $0$, we have
\begin{gather}\label{ineq:estm:ff}
\left| u_{\Omega}^\infty(\hat{x};d) - u_{R_\theta \Omega}^\infty(\hat{x};d) \right| \leq 
\frac{1}{\sqrt{8\pi k}}\,(I_1 +I_2)
\end{gather}
with 
\begin{align}\notag
I_1 &= \int_{\p\Omega} \Big|\big(k\hat{x} \cdot n(y) +\eta\big)e^{-ik\hat{x}\cdot y} - \big(kR_{-\theta}\hat{x} \cdot n(y) +\eta\big)e^{-ikR_{-\theta}\hat{x}\cdot y} \Big|\cdot |\phi_d(y)| \,ds(y),\\\label{ineq:estm:I2}
I_2 &=\int_{\p\Omega}\big|kR_{-\theta}\hat{x} \cdot n(y) +\eta\big| \cdot\| \phi_d - \phi_{R_{-\theta}d} \|_{L^\infty(\p\Om)} \, ds(y)\leq (k+\eta)\, C|\p \Omega| \eps(\theta),
\end{align}
where $|\p\Om|$ is the length of $\p\Om$. 
To estimate $I_1$, we find
\begin{align*}
&\Big|\big(k\hat{x} \cdot n(y) +\eta\big)e^{-ik\hat{x}\cdot y} - \big(kR_{-\theta}\hat{x} \cdot n(y) +\eta\big)e^{-ikR_{-\theta}\hat{x}\cdot y} \Big| \\
 \leq\, & \big|k(e^{-ik\hat{x}\cdot y} \hat{x} - e^{-ikR_{-\theta}\hat{x}\cdot y}R_{-\theta} \hat{x}) \cdot n(y)\big| + \big|\eta(e^{-ik\hat{x}\cdot y} - e^{-ikR_{-\theta}\hat{x}\cdot y})\big|\\
\leq \, &\left(\sqrt{2}k(2k\|y\|_1 +1)|\theta| + O(\theta^2) \right) +  2k\eta\|y\|_1 |\theta|,
\end{align*}
where each term in the last line follows from Lemma \ref{lem:angle_stability}, which will be discussed below, and \eqref{ineq:estm:key}.
Since $\phi_d\in C(\p\Om)$, we have $\|\phi_d\|_{L^\infty(\p\Om)} \leq M_d$ for some $M_d>0$. It follows that
\begin{align}\label{ineq:estm:I1}
\notag I_1 & \leq \int_{\p \Omega} \Big(2k\left((\sqrt{2}k+\eta)\|y\|_1+1) |\theta| + O(\theta^2)\right)\Big)M_d  \, ds(y) \leq C|\theta| + O(\theta^2)
\end{align}
for some constant $C$ depending on $\Omega$, $k$, and $d$.
Hence, by \eqref{ineq:estm:ff} and \eqref{ineq:estm:I2}, we prove the theorem. 
\end{proof}

\begin{lemma}\label{lem:angle_stability}
We have
\begin{align*}
\big\|e^{-ik\hat{x}\cdot y} \hat{x} - e^{-ikR_{-\theta}\hat{x}\cdot y}R_{-\theta} \hat{x} \big\|_2 \leq \sqrt{2}\left(2k\|y\|_1 +1\right)|\theta| + O(\theta^2)\quad\mbox{for }|\theta|\ll 1.
\end{align*}
\end{lemma}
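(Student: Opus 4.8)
The plan is to split the vector difference into two pieces by inserting the mixed term $e^{-ik\hat x\cdot y}R_{-\theta}\hat x$, writing
\begin{align*}
e^{-ik\hat{x}\cdot y}\,\hat{x} - e^{-ikR_{-\theta}\hat{x}\cdot y}R_{-\theta}\hat{x}
= e^{-ik\hat{x}\cdot y}\big(\hat{x} - R_{-\theta}\hat{x}\big) + \big(e^{-ik\hat{x}\cdot y} - e^{-ikR_{-\theta}\hat{x}\cdot y}\big)R_{-\theta}\hat{x}.
\end{align*}
Since $|e^{-ik\hat x\cdot y}|=1$ and $\|R_{-\theta}\hat x\|_2=1$, the triangle inequality reduces the estimate to bounding the two scalar quantities $\|\hat x - R_{-\theta}\hat x\|_2$ and $|e^{-ik\hat x\cdot y}-e^{-ikR_{-\theta}\hat x\cdot y}|$ separately, and then adding them.

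For the first quantity, writing $\hat x = (\cos s,\sin s)$ gives $R_{-\theta}\hat x = (\cos(s-\theta),\sin(s-\theta))$, so a direct computation yields $\|\hat x - R_{-\theta}\hat x\|_2^2 = 2 - 2\cos\theta = 4\sin^2(\theta/2)$, whence $\|\hat x - R_{-\theta}\hat x\|_2 = 2|\sin(\theta/2)| = |\theta| + O(\theta^3)$; in particular this term contributes at most $|\theta| + O(\theta^2)$. (If one prefers to avoid the trigonometric identity entirely, the inequality \eqref{ineq:estm:exp} gives $\|\hat x - R_{-\theta}\hat x\|_2 \le \|\hat x - R_{-\theta}\hat x\|_1 \le 2|\theta|$, at the cost of a slightly larger constant.) For the second quantity, applying \eqref{ineq:estm:exp} with $\alpha = -k\hat x\cdot y$ and $\beta = -kR_{-\theta}\hat x\cdot y$, followed by \eqref{ineq:estm:dot} with $\|x\| = 1$, gives
\begin{align*}
\big|e^{-ik\hat x\cdot y} - e^{-ikR_{-\theta}\hat x\cdot y}\big| \le 2k\,\big|\hat x\cdot y - R_{-\theta}\hat x\cdot y\big| \le 2k\|y\|_1\,|\theta|.
\end{align*}

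Combining the two bounds yields $\|e^{-ik\hat{x}\cdot y}\hat{x} - e^{-ikR_{-\theta}\hat{x}\cdot y}R_{-\theta}\hat{x}\|_2 \le (2k\|y\|_1 + 1)|\theta| + O(\theta^2)$, which is in fact slightly stronger than the claimed inequality (the factor $\sqrt2$ only loosens it). I do not expect a genuine obstacle here: the argument is an elementary triangle-inequality split combined with the two estimates already established for the scalar case in \eqref{ineq:estm:exp} and \eqref{ineq:estm:dot}. The only point deserving a little care is bookkeeping of the $O(\theta^2)$ remainder — using the exact identity $\|\hat x - R_{-\theta}\hat x\|_2 = 2|\sin(\theta/2)|$ makes the first term remainder-free, so the $O(\theta^2)$ in the statement is merely absorbing higher-order slack and the restriction $|\theta|\ll 1$ is only needed so that this slack is controlled by a uniform constant.
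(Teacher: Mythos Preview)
Your proof is correct and takes a slightly different route from the paper's. The paper works component-wise: writing $\hat x=(\cos s,\sin s)$, it expands the squared norm \eqref{eqn:estm:polar} into its two real coordinates, bounds each coordinate of the difference separately by $(2k\|y\|_1+1)|\theta|+O(\theta^2)$ (using the expansion $\cos(s-\theta)=\cos s+\theta\sin s+O(\theta^2)$ together with the same exponential estimate \eqref{ineq:estm:key} that you invoke), and then takes the Euclidean norm of the two-component bound, which is exactly where the factor $\sqrt{2}$ appears. Your argument instead splits the difference directly at the vector level by inserting the cross term $e^{-ik\hat x\cdot y}R_{-\theta}\hat x$ and applies the triangle inequality once in $\|\cdot\|_2$, so there is no coordinate-wise loss; this yields the sharper constant $(2k\|y\|_1+1)$ without the $\sqrt{2}$. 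The paper's version is perhaps more transparent about the underlying coordinate computation, but your decomposition is cleaner and strictly improves the constant with no additional effort.
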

	
\begin{proof}
Set $\hat{x} = (\cos s, \sin s)$ for some $s \in \R$. We have 
\beq\label{eqn:estm:polar}
\begin{aligned}
\big\|e^{-ik\hat{x}\cdot y} \hat{x} - e^{-ikR_{-\theta}\hat{x}\cdot y}R_{-\theta} \hat{x} \big\|_2^2
=&\ \left(e^{-ik\hat{x}\cdot y}\cos s - e^{-ikR_{-\theta}\hat{x} \cdot y}\cos(s-\theta)\right)^2\\
 &+ \left(e^{-ik\hat{x}\cdot y}\sin s- e^{-ikR_{-\theta}\hat{x} \cdot y}\sin(s-\theta)\right)^2.
\end{aligned}
\eeq
Recall that $\cos\theta = 1+O(\theta^2)$ and $\sin\theta=\theta+O(\theta^3)$. 
Using trigonometric identities, one can easily find that
$\cos(s-\theta)  = \cos s + \theta\sin s + O(\theta^2)$. 
Thus, by \eqref{ineq:estm:key}, we obtain
\begin{align*}
\left|e^{-ik\hat{x}\cdot y}\cos s - e^{-ikR_{-\theta}\hat{x} \cdot y}\cos(s-\theta)\right|
 \leq \,&\left|e^{-ik\hat{x}\cdot y} - e^{-ikR_{-\theta}\hat{x} \cdot y}\right||\cos s| + |\theta\sin s|+ O(\theta^2)\\
 \leq \,& \left(2k\|y\|_1+1\right) |\theta|+ O(\theta^2)
\end{align*}
and, similarly, 
$$\left|e^{-ik\hat{x}\cdot y}\sin\phi - e^{-ikR_{-\theta}\hat{x} \cdot y}\sin(\phi-\theta)\right| \leq  \left(2k\|y\|_1+1\right) |\theta| + O(\theta^2).$$ 
Therefore, by \eqref{eqn:estm:polar}, we prove the lemma. 
\end{proof}

	\section{Bayesian optimization for orientation angle} \label{sec:bo}
	Now suppose that $\Omega$ moves to $\Omega^*$ by a rigid motion on $\R^2$. Then we can write 
	\begin{align*}
	\Omega^* = R_{\theta^*}\Omega +\tau^*
	\end{align*}
	for some displacement vector $\tau^* \in \R^2$ and orientation angle $\theta^* \in [0,2\pi)$. 
	In this setting, we aim to find $\tau$ and $\theta$ satisfying
	\begin{align}\label{eqn:bo:ff} 
		u^{\infty}_{R_{\theta}\Omega +\tau} = u^\infty_{\Omega^*}\quad\mbox{for }\Omega^* = R_{\theta^*}\Omega +\tau^*.
	\end{align}
	In this section, we describe the method for determining $\theta$ by using Bayesian optimization. 	
		
	\subsection{Bayesian optimization}
	We begin with a brief introduction to Bayesian optimization.
    Consider a parameter space  $X$, and let $f:X \to \R$ be an unknown objective function that we want to minimize. (If we want to maximize $f$, we can instead minimize $-f$.) While various methods exist for optimizing $f$, many become impractical when evaluating $f$ is computationally expensive.
    In such cases, Bayesian optimization offers a promising alternative, as it efficiently evaluates the objective function, significantly reducing computational cost. 
	
	The Bayesian optimization yields a posterior optimizer by taking a prior over the objective function $f$ and utilizing acquisition functions that select effective sample points based on posterior updates from previous observations \cite{Brochu:2010:TBO}.  
	A Gaussian process (GP) is frequently chosen as the prior due to its flexibility: it allows closed-form computation of marginals and conditionals, making it suitable for a wide range of optimization problems \cite{O'Hagan:1978:CFO, Mockus:1994:ABA}. Several acquisition functions, such as Probability of Improvement (PI) \cite{Kushner:1964:NML}, Expected Improvement (EI) \cite{Mockus:1978:TGO, Jones:1998:EGO, Lizotte:2008:PBO}, and Upper Confidence Bound (UCB) \cite{Cox:1992:SMG, Cox:1997:SDO}, can be used to guide the selection of sample points, depending on the desired trade-offs in predicting optimal values with minimal evaluations. Notably, these three functions admit closed-form expressions under a GP prior. Leveraging the GP prior, Srinivas proposed the GP-UCB, a more systematic variant of UCB that further enhances performance \cite{Srinivas:2009:GPO}.
	
	To apply Bayesian optimization to the function $f$, we impose a prior distribution $P(f)$ for $f$. Let $x_i \in X$ represent the $i$-th sample point, and let $d_i$ be the noisy observation of $f$ at $x_i$ with the measurement error being modeled as the Gaussian noise. Then we can write
	\begin{align} \label{eqn:bo:observation}
	d_i = f(x_i)+\eps_i, \quad \text{ with } \eps_i \sim \mathcal{N}(0,\nu I)
	\end{align}
	where $\nu$ is the variance of the Gaussian noise so that $d_i \sim \mathcal{N}(f(x_i),\nu I)$. Given a set of observations $d_{1:n} = \{d_1,\cdots, d_n \}$,  the Bayesian posterior distribution of $f$, conditioned on the data, satisfies
	\begin{align} \label{rel:posterior}
	 P(f|d_{1:n}) & \propto P(d_{1:n}|f) P(f),
	\end{align}
	where $P(d_{1:n}|f)$ equals to the multivariate normal distribution of mean $(f(x_1),\cdots, f(d_n))$ and covariance matrix $\nu I_n$. With sufficient data points $d_{1:N}$ for some $N$, the optimizer $x^*$ of $f$ is obtained by maximizing the posterior distribution, i.e., by finding $x^*$ that maximizes $P(d_{1:N}|f) P(f)$.
	
	To efficiently construct a set of sample data $d_{1:N}$, we utilize an acquisition function $u$. After initially evaluating the function at some random points $x_1, \cdots, x_{i_0}$, the next sample point is iteratively selected by maximizing the acquisition function $u$ based on previous observations. In specific, after having $d_{1:i}$ with $i\geq i_0$, we take the next point
	\begin{align}  \label{eqn:bo:sample}
		x_{i+1} = \arg\max_{x\in X} u(x| d_{1:i})
	\end{align}
	where $u(x| d_{1:i})$ is conditioned on the previous data $d_{1:i}$. The objective function $f$ is then evaluated at $x_{i+1}$ to obtain $d_{i+1}$ by \eqref{eqn:bo:observation}.  This sampling process is repeated until the full set of data $d_{1:N}$ is collected. We remark that it is important to select an appropriate acquisition function and determine the optimal number of data points $N$ to balance the cost of evaluations while attaining high performance in optimization.

\subsection{Inference of the orientation angle}\label{subsec:inf:angle}
We now discuss the inference of the orientation angle based on Bayesian optimization in the framework of moving targets. 
	Then, in view of \eqref{eqn:bo:ff}, we set the objective function for $\theta$ on $[0,2\pi)$ by
	\begin{align} \label{eqn:bo:objective}
	f(\theta) = \min_{\tau\in \R^2} \|u^\infty_{R_\theta \Omega+\tau}-u_{\Omega^*}^\infty \|
	\end{align}  
	so that $f$ is minimized at $\tau=\tau^*$ and $\theta= \theta^*$.
	We assume $|\tau^*|$ and $|\theta^*|$ are not too large so that the inference process is feasible. For instance, if we assume $\|\tau^*\|<M$ for some $M>0$, we have the objective function
	\begin{align} 
		f(\theta) = \min_{\tau\in \R^2,\, \|\tau\| < M} \|u^\infty_{R_\theta \Omega+\tau}-u_{\Omega^*}^\infty \|.
	\end{align} 
	In practice, one can take time intervals small enough to achieve the assumption; this step is encapsulated in subsection \ref{subsec:framework:method}.

	 For each $\theta$, we can compute $f$ by using Adam optimizer \cite{Kingma:2014:AMS} for a two-layer neural network that examines $\tau$ around the origin. That is, the neural network minimizes the right-hand side of \eqref{eqn:bo:objective}. Due to the translation formula \eqref{eqn:translation}, this minimization is equivalent to finding approximate zeros of 
	\begin{align*}
		 e^{-ik\tau\cdot (\hat{x}-d)}u^\infty_{R_\theta\Omega}(\hat x) = u_{\Omega^*}^\infty
	\end{align*}
 	within the domain of $\tau$. For this optimization, it is sufficient to use a two-layer neural network by its approximating property that arises from the nonlinear hidden layer. Readers may refer to \cite{Cybenko:1989:ASS, Barron:1993:UAB, Barron:1994:AEB} for universal approximation theorems of two-layer neural networks and to \cite{Hornik:1994:DAR, Siegel:2020:ARN} for their developments with relaxed assumptions.

	Since evaluating the objective function $f$ is computationally expensive due to minimization, we apply Bayesian optimization to efficiently optimize $f$. Let $\theta_i$ be the $i$-th sample and $d_i$ be the noisy observation of $f$ at $\theta_i$ modeled as \eqref{eqn:bo:observation} so that
	\[
	d_i = f(\theta_i)+\eps_i, \quad \text{ with } \eps_i \sim \mathcal{N}(0,\nu I).
	\]
	where $\nu$ is the variance of the Gaussian noise. Given a prior distribution $P(f)$ and a set of observations $d_{1:n}$, the posterior distribution of $f$ is obtained by \eqref{rel:posterior}.
	We fix the number of data points $N$ and collect the data $d_{1:N}$ to determine $\theta^*$, the optimizer of $f$, that maximizes the right-hand side of \eqref{rel:posterior}.

For the prior distribution, $P(f)$, we impose Gaussian process priors on $\theta$. Specifically, we take $P(f(\theta)) \sim GP(m(\theta),k({\theta},{\theta}'))$ where $m$ is the mean function and $k$ is the covariance function. Given that the size of $|\theta|$ is assumed to be small, we here suppose $m$ to be the zero function. Moreover, by Theorem \ref{thm:angle_stability}, we anticipate that $f$ is locally convex at $\theta^*$, with at most linear proportionality. Consequently, we set $k$ to be Mat\'ern kernel of parameter $\nu = 0.5$, reflecting the likelihood that $f$ exhibits less smoothness near $\theta^*$. (Readers may recognize $k$ as the unsquared exponential kernel.)

For the acquisition function, $u$, we take the Expected Improvement (EI) function with $\xi=0.1$. This choice is to consider a balance between exploration and exploitation, as described in \cite{Lizotte:2008:PBO}, with introducing a slight bias toward exploration to efficiently find the optimizer. The EI function is often an effortless choice in practice since it does not require additional tuning parameters. Using this acquisition function, we iteratively select new sample points 
\[
\theta_{i+1} = \arg\max_{\theta} u(\theta| d_{1:i}).
\] 
in accordance with \eqref{eqn:bo:sample}. To further reduce the search time of $\theta_{i+1}$, we implement the Sequential Domain Reduction Transformation technique (See \cite{Stander:2002:RSD}).
	
By executing the Bayesian optimization, we identify the optimizer $\theta^*$ of $f$, which aligns with the orientation angle of the target $\Omega^*$. Figure \ref{fig:obj_func} visually illustrates the Bayesian optimization process for small and extended targets. The optimization was successfully completed in $N=8$ steps in both targets. 
Note that the small target exhibits steeper slopes compared to  the extended target (see the proof of Theorem \ref{thm:angle_stability}).
	
\begin{figure}[h!]
    \centering
    \includegraphics[width=0.62\textwidth]{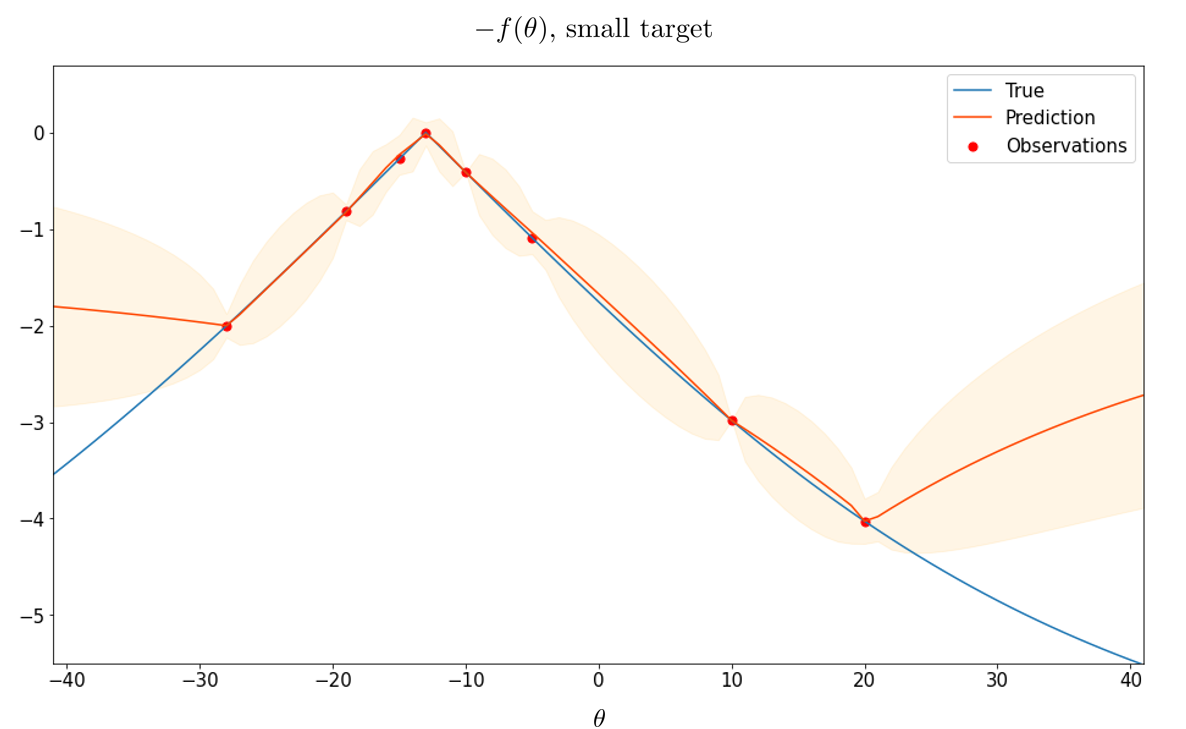}
    \vskip 3mm
    \includegraphics[width=0.62\textwidth]{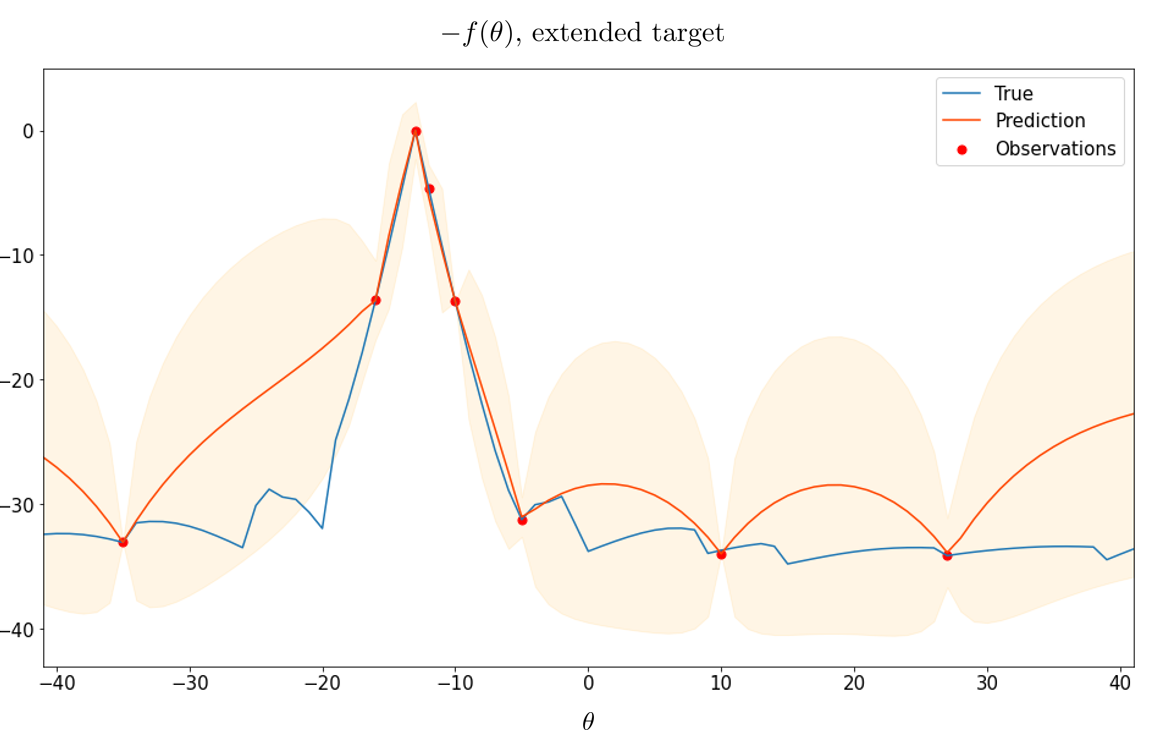}
    \caption{\label{fig:obj_func} 	
Plots of Bayesian optimization for the objective function $f$ in \eqref{eqn:bo:objective} for a small target (top) and an extended target (bottom). The red points indicate the examined sample points for $-f(\theta)$. The blue curves represent the true values, while the orange lines depict the predicted values during the optimization process. The orange shaded regions illustrate the confidence intervals corresponding to one standard deviation of the predictions.}
	\end{figure}

\section{Full framework for the tracking } \label{sec:framework}

	In this section, we present the complete framework for tracking a moving scatterer. The process begins with the scenario where the shape of the scatterer is unknown. We first reconstruct the boundary of the scatterer using a neural network and then proceed with the tracking scheme designed for scatterers with known shapes.

	\subsection{Deep learning for shape identification} \label{subsec:framework:shape}
	To identify the boundary of the scatterer, we use a fully connected neural network (FCNN) $\Phi$. An FCNN is a type of artificial neural network composed of a sequence of fully connected layers. Let $\Phi$ consist of $L+1$ layers ($L\geq 1$), with the $l$-th layer, $l=0,1,\cdots, L$ containing $n_l$ nodes. We then write $\Phi:\R^{n_0}\to \R^{n_L}$ as
	\begin{align} \label{eqn:nn}
	\Phi = \begin{cases}
	h_2 \circ \sigma \circ h_1, & L=1\\
	h_L \circ \sigma \circ h_{L-1} \circ \cdots \circ \sigma \circ h_1, & L \geq 2
	\end{cases}
	\end{align}  
	where $h_i: \R^{N_{i-1}} \to \R^{N_i}$ for $i=1,\cdots,L$ is defined by the affine transformation $h_i(x) = W_ix + b_i$. Here, $W_i \in \R^{N_i \times N_{i-1}}$ represents the weight matrix, $b_i \in \R^{N_i}$ is the bias vector, and $\sigma:\R \to \R$ is a nonlinear activation function that is applied element-wise.
    
    The term \textit{fully connected} indicates that no zero entries are pre-assigned in $W_i$, meaning every node in one layer is connected to all nodes in the next layer. To train the network $\Phi$, we optimize each weight $W_i$ and bias $b_i$ so that $\Phi$ approximates a desired mapping, denoted by $\Psi$. This is achieved by minimizing a loss function, which quantifies the difference between the outputs of $\Phi$ and $\Psi$, using gradient-based optimization and updating the parameters until the loss is sufficiently small.

	In our setting, the desired mapping $\Psi$ is the function that maps the far-field data to the parameters representing the boundary shape of the scatterer, i.e., $\p \Omega$. Therefore, $n_0$ represents the number of components of the far-field data, and $n_L$ corresponds to the number of boundary parameters. As Gao et al. discussed in \cite{Gao:2022:ANN}, with an adequate number of layers and sufficient training data, it is possible to reconstruct the boundary from the far-field data. This process requires careful control of the number of layers and data to efficiently manage resources.
	
	To improve the performance of the network, we discuss three important considerations in its design. First, we must account for the structure of the data. Since both the far-field data and the boundary parameters are complex-valued, it is inappropriate to use standard networks designed for real-valued parameters. Hence, we separate the parameters into either rectangular or polar form. The loss function must also be carefully selected to handle complex vectors.
	
	Next, we need to tune the weight parameters of the loss function. Since some parameters are more difficult to learn than others, it is necessary to apply a weighted loss function and adjust the weights to improve the output. For example, in the ellipse perturbation, the parameters representing the Fourier coefficients are relatively less sensitive in terms of performance, so we may assign higher weights to these parameters compared to the others (see subsection \ref{subsec:num:outline} or the Appendix).
	
	Lastly, we must choose an appropriate activation function to enhance the network's learning ability. Activation functions introduce nonlinearity to the model, increasing its representational capacity. However, some functions, including ReLU, may cause the vanishing gradient problem, which can hinder the learning process \cite{Lu:2020:DRI}. Therefore, it is important to prevent such issues by selecting a suitable activation function for the model. Readers may refer to \cite{Rasamoelina:2020:RAF} for a review of various activation functions.
	
	The architecture of the network in our work is a FCNN $\Phi:\R^{n_0}\to \R^{n_L}$ with $L=5$. For clarity, we denote $n_0$ and $n_L$ as $n_i$ and $n_o$, respectively, to explicitly represent the sizes of the input and output layers. Since the data are complex-valued, the network processes both the real and imaginary parts of the input, outputting their magnitudes and arguments. As a result, the sizes of $n_i$ and $n_o$ are twice the size of the input far-field data and twice the number of shape parameters, respectively. The number of nodes in each layer of $\Phi$ is $n_i$, $2n_i$, $3n_i$, $5n_o$, $2n_o$, and $n_o$, in the given order. 
	
    Additionally, we use SELU (Scaled ELU) activation functions to induce a self-normalizing effect in the network \cite{Klambauer:2017:SNN} and apply the LeCun uniform initialization to facilitate efficient learning. 
	The loss is computed using an $L^2$ weighted norm for complex vectors constructed from the output. The weights are set to $2.5$ for the Fourier coefficients parameters, and $1$ for the others. Figure \ref{fig:framework:dl} visualizes the network architecture, where $J$ denotes the number of shape parameters. 
	
	\begin{figure}[h!]
		\centering
		\includegraphics[width=0.95\textwidth]{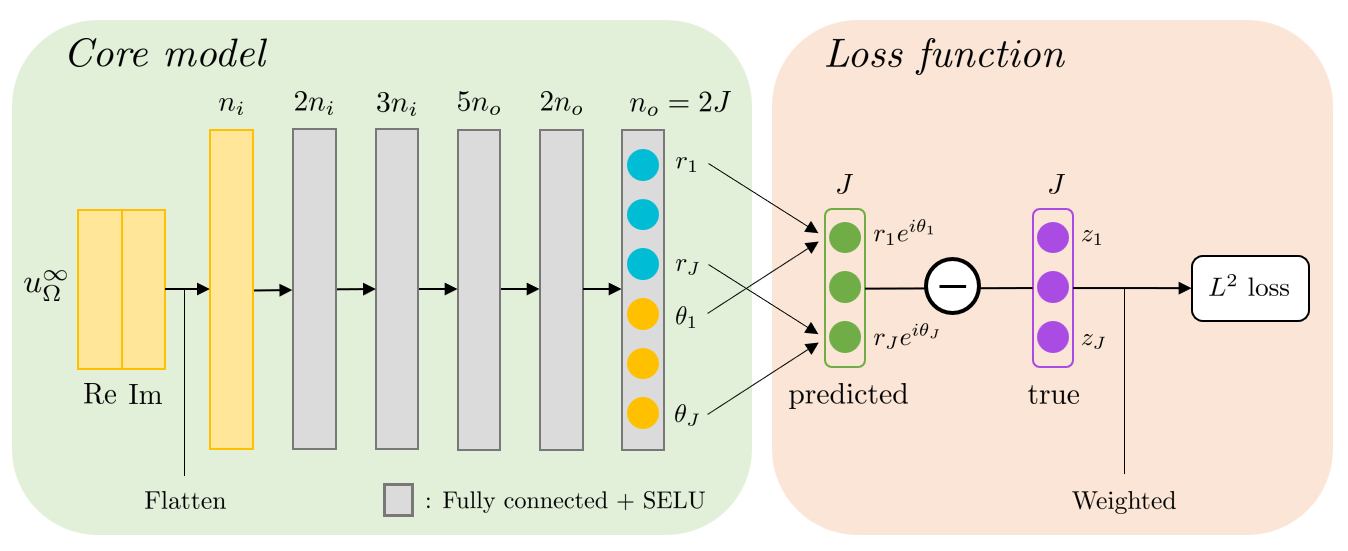}  
		\caption{\label{fig:framework:dl} Architecture of the neural network $\Phi$. The left side shows its core model, while the right side illustrates the loss function used for training. $J$  represents the number of shape parameters defining the boundary of the target $\p \Omega$.}
	\end{figure} 
	
	In this setting, we train the network using the Adam optimizer with a learning rate $0.0001$ and a mini-batch size of $128$ for each iteration. The network is trained over $5000$ epochs, with a total of $40000$ samples. Of these, $32000$ samples are used for the training set and $8000$ for the test set. Note that the sample data are generated by solving the direct scattering problem in \eqref{prob:scattering} using \eqref{eqn:density} and \eqref{eqn:ff:orig}.
	
    We emphasize that a pretrained network is desirable for practical use, and this can be accomplished by utilizing forward solvers. Therefore, in the framework presented below, we prepare the pretrained network using a high-quality dataset in advance.

	\subsection{Tracking procedure}  \label{subsec:framework:method}
	We now explain the method for tracking a moving scatterer using its far-field data. Recall that, due to rigid motions, once the shape of the object at the initial time is known, the problem reduces to estimating the displacement and rotation angle of $\Omega_t$ at each time $t>0$. By determining both the location and orientation of the target at each discretized time step, the entire tracking process can ultimately be achieved.

	Our method is based on the translation formula \eqref{thm:translation} and the Bayesian optimization of the objective function \eqref{eqn:bo:objective} to determine the location and orientation angle of the target scatterer. Assume that $\Omega_0 = \Omega$ contains the origin, and that $\Omega_t$ has an orientation angle $\theta_t$ and a displacement $\tau_t$ with respect to the origin, i.e.,
	\begin{align*}
	\Omega_t = R_{\theta_t}\Omega + \tau_t.
	\end{align*}
	This formulation is consistent with the consecutive movement of the target, since for any $t,s>0$, it holds that 
	\begin{align*} 
	R_{\theta_{t+s}}\Omega + \tau_{t+s} = R^{\tau_t}_{\theta_s}(R_{\theta_t}\Omega+\tau_t)+\tau_s
	\end{align*}
	where $R^{\tau}_\theta$ denotes the operator of a two-dimensional rotation by $\theta$ around $\tau\in \R^2$. The objective is to estimate the shape of $\Omega_0$ and to find $\tau_t$ and $\theta_t$ from the far-field data of $\Omega_t$, $t\geq 0$. 
	
	During the movement, we measure the far-field data at discrete times $t=n\delta$, where $\delta$ is the unit time step and $n\in\Z_{\geq 0}$. By discretizing, we denote $\Omega_n=\Omega_{n\delta}$ with  corresponding displacements $\tau_n=\tau_{n\delta}$ and orientation angles $\theta_n = \theta_{n\delta}$. We set the initial values as  $\tau_0=(0,0)$ and $\theta_0=0$, and assume $K$ receivers are equidistantly distributed. Then the measurement data using $M$ receivers are defined by
	\begin{align*}
	u^\infty_n := \begin{pmatrix}
	u^\infty_{\Omega_{n}}(\hat{x}_{i_0})\\
	\vdots\\
	u^\infty_{\Omega_{n}}(\hat{x}_{i_{M-1}})
	\end{pmatrix}
	\end{align*}
	where $\hat{x}_{i_m} = (\cos\theta_{i_m}, \sin\theta_{i_m})$ with $\theta_{i_m} = 2\pi i_m/K$ and $i_m \in \{0,1,\cdots, K-1\}$ for $m=0,1,\cdots, M-1$. Note that $M=K$ in the instance of full measurements.

	\paragraph{Steps in the tracking.}
	In this procedure, we choose $\delta$ to be sufficiently small so that $\tau_n$ and $\theta_n$ do not assume excessively large values. This helps improve the performance of the Bayesian optimization described in subsection \ref{sec:bo} by reducing the search domains for $\tau$ and $\theta$. We implement the Bayesian optimization by using the module from \cite{Nogueira:2014:BOO} with a total of $9$ steps. With this setup, the target can be tracked using the following procedure:
		When the shape of $\Omega_0$ is unknown, we append the following step 0 for the shape reconstruction.
		\begin{itemize}
			\item {\textbf{Step 0.}} Collect the far-field data of the object at $t=0$ and train the network described in section \ref{subsec:framework:shape} to learn the shape features of the object. Using the trained network, reconstruct the shape of $\Omega_0$ from the far-field data $u_{0}^\infty$.
		\end{itemize}
		
	\begin{itemize}
		\item {\textbf{Step 1.}} Given $\Omega_0$, generate the set of far-field data of $R_\theta \Omega_0$ over $\theta\in[0,2\pi)$. Note that the initial values are $(\tau_0, \theta_0)= ((0,0),0)$.
		\item {\textbf{Step 2.}} Given $(\tau_{n},\theta_{n})$,  perform Bayesian optimization around $\theta_n$ to estimate $\theta_{n+1}$ from $u^\infty_{n+1}$, i.e., minimize
		\begin{align} \label{eqn:objective:framework}
		f_n(\theta) = \min_{\tau\in B(\tau_n)} \|u^\infty_{R_{\theta}\Omega_0+\tau}-u^\infty_{n+1}\|
		\end{align}
		over $B(\theta_n)$ by Bayesian optimization, where $B(x)$ denotes a neighborhood of $x$. During the optimization, we enforce examination of $\theta_{n}$ as well as $\theta_n \pm \phi$ $(\in B(\theta_n) )$ for some $\phi>0$ to improve performance. 
		\item{\textbf{Step 3.}} Find the argument minimum $\tau_{n+1}$ of the right-hand side of \eqref{eqn:objective:framework} to complete the pair $(\tau_{n+1}, \theta_{n+1})$. Repeat Steps 2 and 3 iteratively until the tracking is complete.
	\end{itemize} 
	By using the shape constructed in Step 0 with the data $(\tau_n, \theta_n)$ obtained in Step 3, we can reconstruct the entire movement of the target scatterer. In specific, we estimate
	\begin{align} \label{eqn:recover}
	\Omega_n  = R_{\theta_n}\Omega_0 + \tau_n 
	\end{align}
	for each $n$.
	
	\begin{remark} \rm
		This algorithm is independent of both the shape and trajectory of the scatterer since it relies on Lemma \ref{thm:translation} and Theorem \ref{thm:angle_stability}. However, if the scatterer is large, the proof of Theorem \ref{thm:angle_stability} suggests that the slope around the true angle $\theta'$ could become steep, thereby increasing the error in the Bayesian optimization process in Step 2. This phenomenon is illustrated in Figure \ref{fig:obj_func}.
	\end{remark}

\section{Numerical simulations} \label{sec:num}
	We perform numerical experiments to evaluate the performance of our tracking algorithm. In the following subsection, we outline the experimental setup, and the subsequent subsection presents the results.
	
	\subsection{Modeling of a moving target} \label{subsec:num:outline}
	\paragraph{Shape of an extended target}	
We consider the boundary of each target as a perturbed ellipse. In terms of the complex variable $w$, one can express an ellipse as $e_0+\frac{e_1}{w}$ with some complex coefficients $e_0$ and $e_1$, where the normal vectors to the ellipse are direction $w-\frac{e_1}{w}$.
We assume the boundary curve of $\Omega$ is given by the image of $|w|=r$ via the mapping		
\begin{align} \label{eqn:bd}
		w \mapsto w+e_0+\frac{e_1}{w} +\eps\Big(w-\frac{e_1}{w} \Big) \cdot 2\text{Re}\bigg(\sum_{n=0}^{N-1} f_ne^{in \theta} \bigg),\quad w=r e^{i\theta}
		\end{align}
		for some $r>0$ and the Fourier coefficients $f_n$, $n=0,1,\dots, N-1$, of the associated perturbation function. Here, $\eps>0$ is a small parameter and $N$ is a positive integer. The parameters in this shape configuration are $r,\eps, e_0,e_1$, $N$, and $f_n$ (where $n=0,1,\cdots, N-1$). By allowing $e_1$ to vary, the perturbed ellipse provides greater flexibility in representing shapes compared to a disk perturbation (see, for instance, \cite{Choi:2021:ASR}).

		In the experiments, we consider extended targets with the wavenumber $k=1$. The boundaries of the scatterers are parameterized by \eqref{eqn:bd} and are required to have a diameter larger than $2\pi$. It is crucial to ensure that each boundary curve does not intersect itself and that the origin is enclosed by the curve. To generate target scatterers meeting these constraints, we derived specific conditions for the parameters in \eqref{eqn:bd}; see the appendix for details. Based on these conditions, we constructed a dataset consisting of $40,000$ samples, each characterized by $N=5$, $e_0=0$, $|e_1| \in [0,70]$, $|f_n| \in [0,1]$, $r \in [9.5,13.5]$ with $\eps = 0.01$. These settings enable representation of various shapes.
		
		For each target $\Omega$, we fix the direction of the incident field as $d = (1,0)$ and generate the far-field data \eqref{eqn:ff:orig} by solving the the integral equation \eqref{eqn:density}, with $\eta = k$ to ensure the stability of the equation \cite{Kress:1985:MCN}.
		Note that the far-field data obtained in this step correspond to $u^\infty_0$. In a similar manner, we generate the far-field data for $R_\theta \Omega$ over $\theta \in [0,2\pi)$, or for $\Omega_n$ that moves along its trajectory. All far-field data were computed using GYPSILAB on MATLAB \cite{Alouges:2018:FBS}.

\paragraph{Movement of a target.}
	We model the movement of a target scatterer $\Om_t$ over time by treating its location $\tau_t$ and angle $\theta_t$ as random processes. For the location $\tau_t$, we let the velocity of the target at $t$ be given by
		\begin{align*}
		v_t = v_0 + \sigma_v W^{(2)}_t
		\end{align*} 
		where $v_0\in\R^2$ is the initial velocity, $\sigma_v>0$ is the standard deviation, and $W^{(2)}_t$ is the standard two-dimensional Brownian motion. We denote $v_{n}= v_{n\delta}$ to be consistent with the previous notation. The location $\tau_t$ then satisfies
		\begin{align*}
		\tau_t = \int_0^t v_s ds
		\end{align*} 
		as we set $\tau_0=0$. Using stochastic theorems, we can express the dynamics as
		\begin{align} \label{traj:dynamics:v}
		\begin{pmatrix}
		v_{n+1} \\ \tau_{n+1} 
		\end{pmatrix} = 
		\mathbb{F} \begin{pmatrix}
		v_{n} \\ \tau_n 
		\end{pmatrix} + \mathbb{A}, \quad
		\mathbb{F} = \begin{pmatrix}
		\mathbb{I}_2 & 0 \\
		\tau \mathbb{I}_2 & \mathbb{I}_2 
		\end{pmatrix},
		\end{align}
		where $\mathbb{A}$ follows a four-variate normal distribution with mean zero and covariance matrix $\Sigma$ given by 
		\begin{align*}
		\Sigma = 
		\delta\begin{pmatrix}
		\sigma_v^2 \mathbb{I}_2 & \sigma_v^2/2\cdot \mathbb{I}_2 \\
		\sigma_v^2/2 \cdot \mathbb{I}_2 & \sigma_v^3/3\cdot \mathbb{I}_2 
		\end{pmatrix}.
		\end{align*}
Next, we let rotation angle $\theta_t$ of the target at $t$ be given by
		\begin{align*} 
		\theta_t = \theta_0 + \sigma_\theta W_t
		\end{align*}
		where $\theta_0\in[0,2\pi)$ is the initial orientation, $\sigma_\theta>0$ is the standard deviation, and $W_t$ is the standard one-dimensional Brownian motion. We then have
		\begin{align} \label{traj:theta}
		\theta_{n+1} = \theta_n + A_\theta,
		\end{align}
		where $A_\theta$ follows a normal distribution with mean zero and variance $\sigma_\theta^2$.
		In \cite{Ammari:2013:TMT}, the moving target models \eqref{traj:dynamics:v} and \eqref{traj:theta} are used to address the tracking problem for the planar conductivity equation with the generalized polarization tensors.

    In all simulation examples, we set the parameters $\sigma_v \in [1,2]$  and $\sigma_\theta = [1, 1.5]$ to simulate mild variations in the locations and orientations of the target. For each test target $\Omega$, sample trajectories were generated up to $n=80$.

    \subsection{Examples} \label{subsec:num:ex}
    We present the results of numerical simulations based on the setup described in the previous subsection. For the dataset of far-field data, we used $24$ equidistantly distributed receivers for the full view and added Gaussian white noise with levels of $5\%, 10\%$, and $20\%$. In all examples, Bayesian optimization was performed with a search radius of $\phi=50\degree$. (Refer to subsection \ref{subsec:framework:method}.)  Figure \ref{fig:full:shape} shows the shape identification results, while Figure \ref{fig:full:traj} demonstrates sample tracking results on the $xy$-plane, both serving as illustrating examples.

	\begin{figure}[h!]
		\centering
		\includegraphics[width=0.9\textwidth]{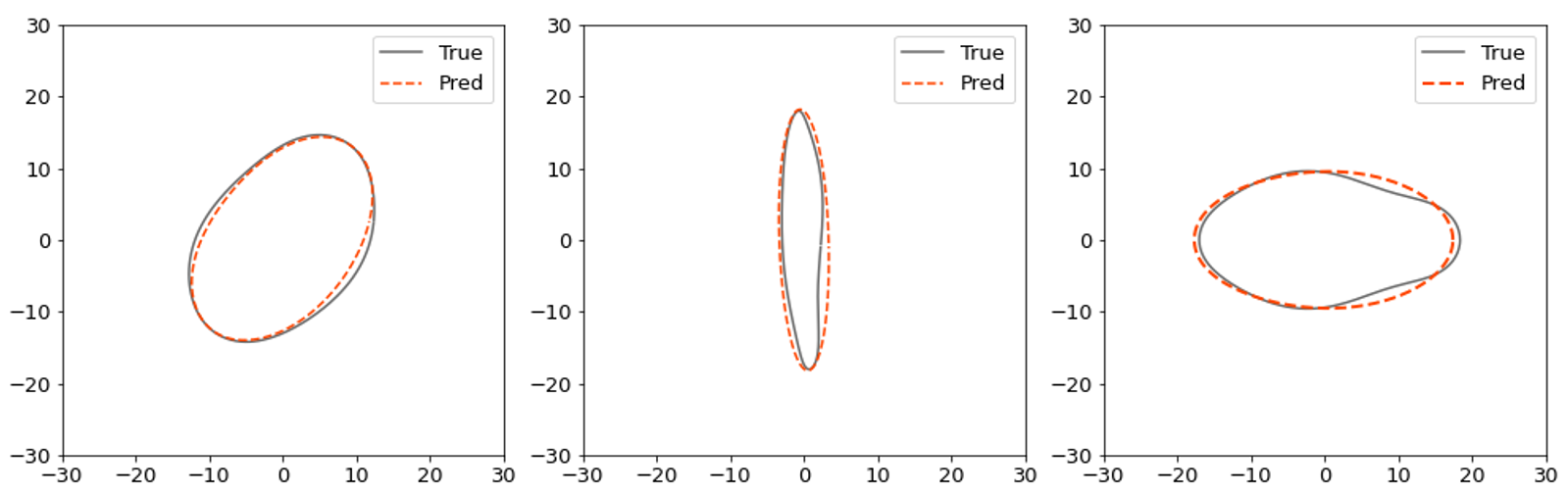}  
		\caption{\label{fig:full:shape} Plots of the shape identification for the boundaries of targets. The gray line represents the true boundary, while the orange dashed line indicates the predicted boundary.}
	\end{figure} 
	
	\begin{figure}[h!]
		\centering
		\includegraphics[width=0.9\textwidth]{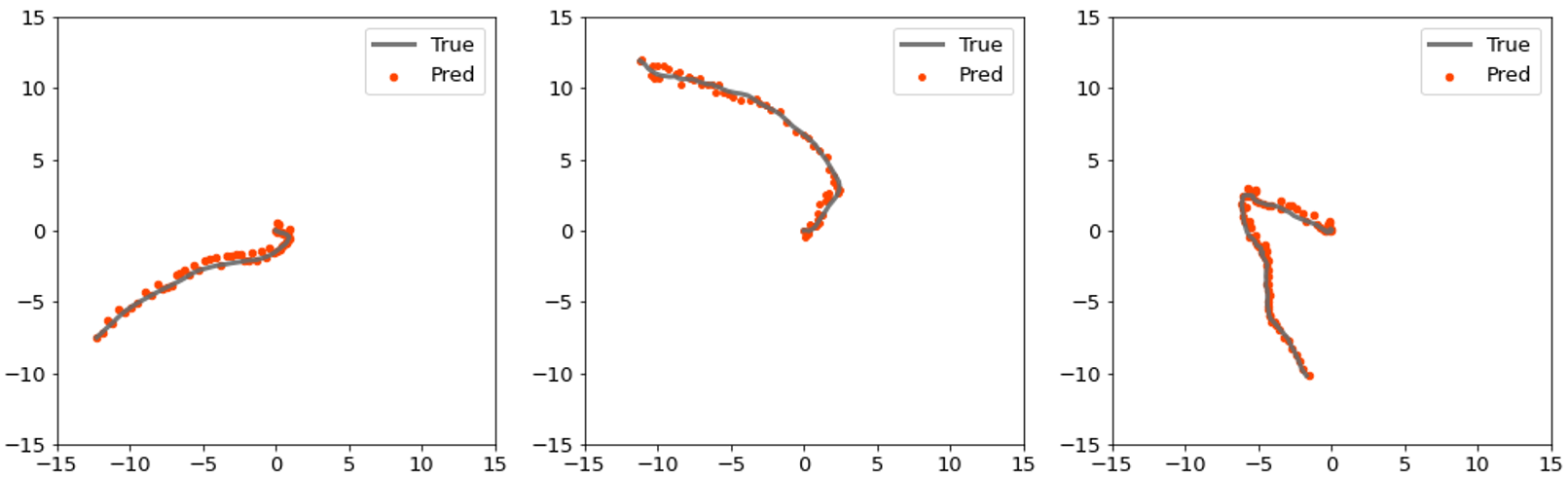} 
		\caption{\label{fig:full:traj} Plots of the tracking results on the $xy-$plane for far-field data with $5\%$ noise. The gray line represents the true trajectory, while the orange points show the predicted locations at each step.} 
	\end{figure} 

	Throughout the examples, one step of tracking the location and orientation took between 20 to 30 seconds. This time is reasonable, given the complexity of tracking angles and the accuracy presented in the following examples.
	
	\subsubsection{Tracking in full view with an unknown shape}
				
	Figure \ref{fig:full:result:unknown} shows the tracking results for an unknown shape under various noise levels in the full-view setting. The results demonstrate that the true trajectory and the predicted trajectory closely coincide. However, data with higher noise levels tend to exhibit larger deviations in the tracking results.
		
		\begin{figure}[h!]
			\centering
			\includegraphics[width=0.9\textwidth]{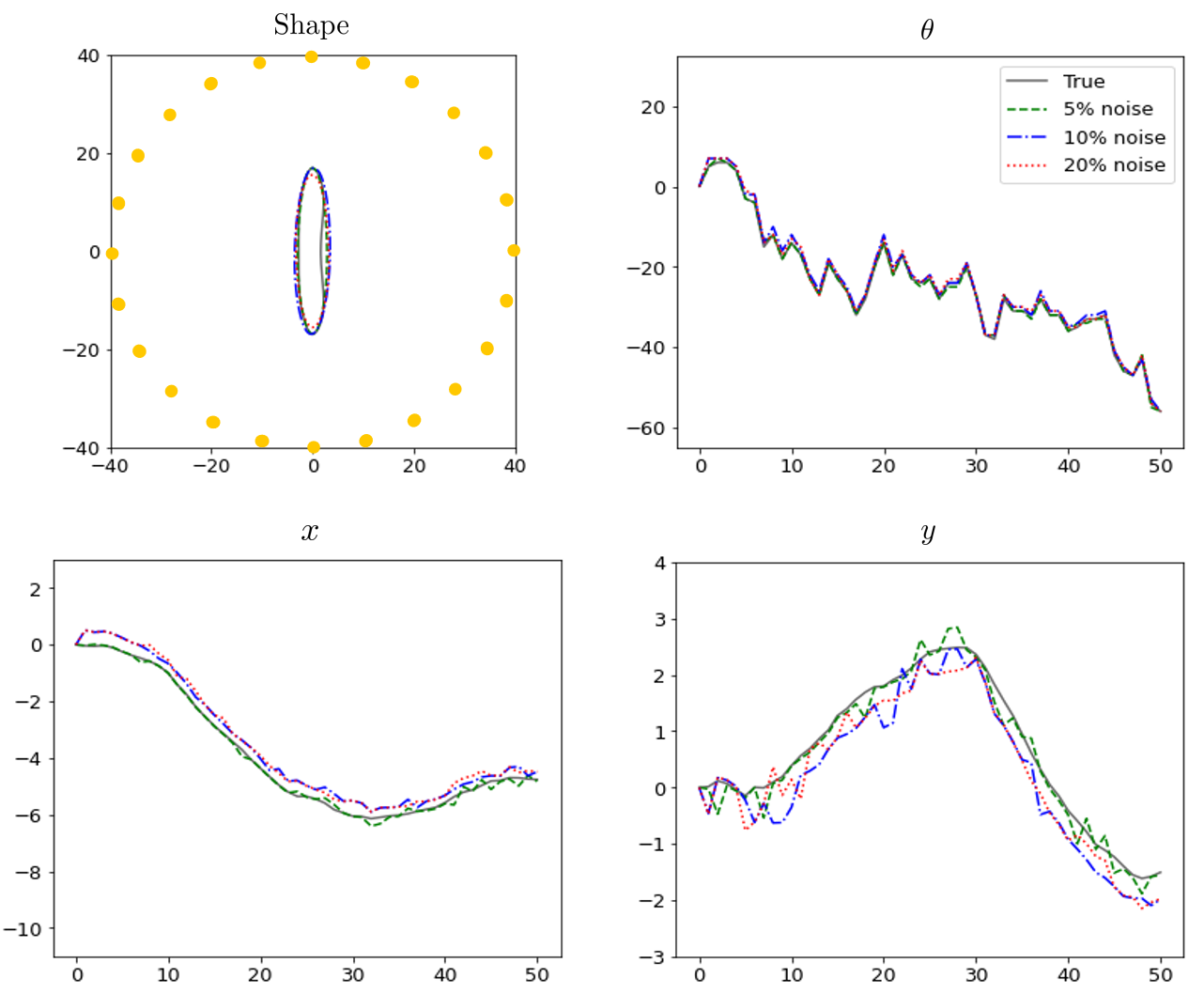} 
			\caption{\label{fig:full:result:unknown} Tracking results for an unknown shape with various noise levels in the full-view setting. The shape identification is shown in the top-left plot. Tracking results for the $x$-position (bottom-left), $y$-position (bottom-right), and orientation angle (top-right) are presented. The yellow marks indicate the measurement directions.}
		\end{figure}

\subsubsection{Tracking in limited view with a unknown shape}	
	 We now consider an unknown shape where the measurements are limited to half and a quarter of the full view. As the number of measurements decreases, the performance of shape identification deteriorates. Figures \ref{fig:half:result:unknown} and \ref{fig:quarter:result:unknown} present the tracking results under various noise levels for the half and quarter measurement configurations, respectively. These results demonstrate that as the noise increases, the tracking of the location deviates further from the true trajectory. Furthermore, the deviation in the tracking angle becomes more certain as the number of measurements decreases.

		\begin{figure}[h!]
			\centering
			\includegraphics[width=0.9\textwidth]{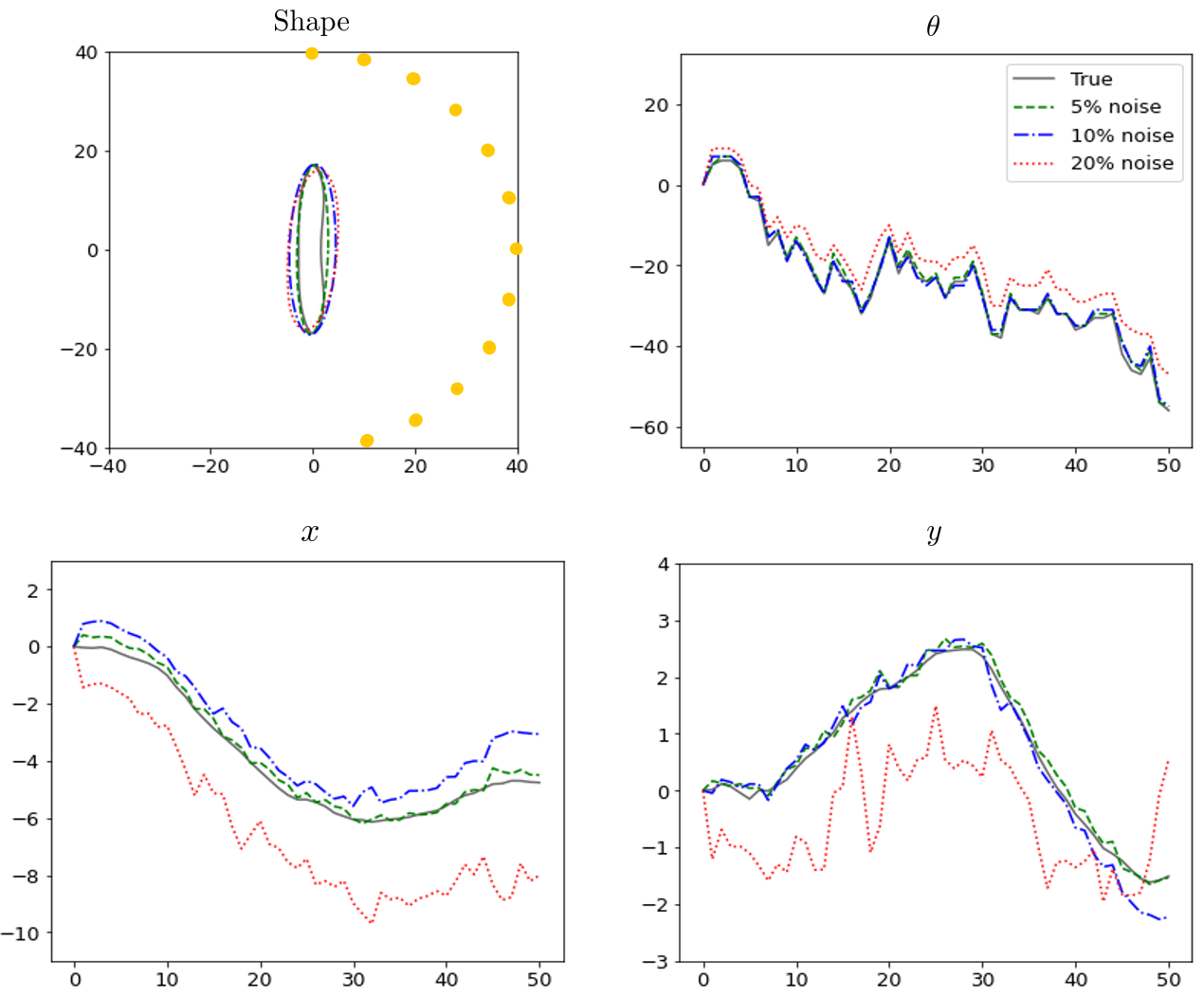}  
			\caption{\label{fig:half:result:unknown} Tracking results for an unknown shape with various noise levels in the half-view setting. The shape identification is shown in the top-left plot. Tracking results for the $x$-position (bottom-left), $y$-position (bottom-right), and orientation angle (top-right) are presented. The yellow marks indicate the measurement directions.}
		\end{figure}

		\begin{figure}[h!]
			\centering
			\includegraphics[width=0.9\textwidth]{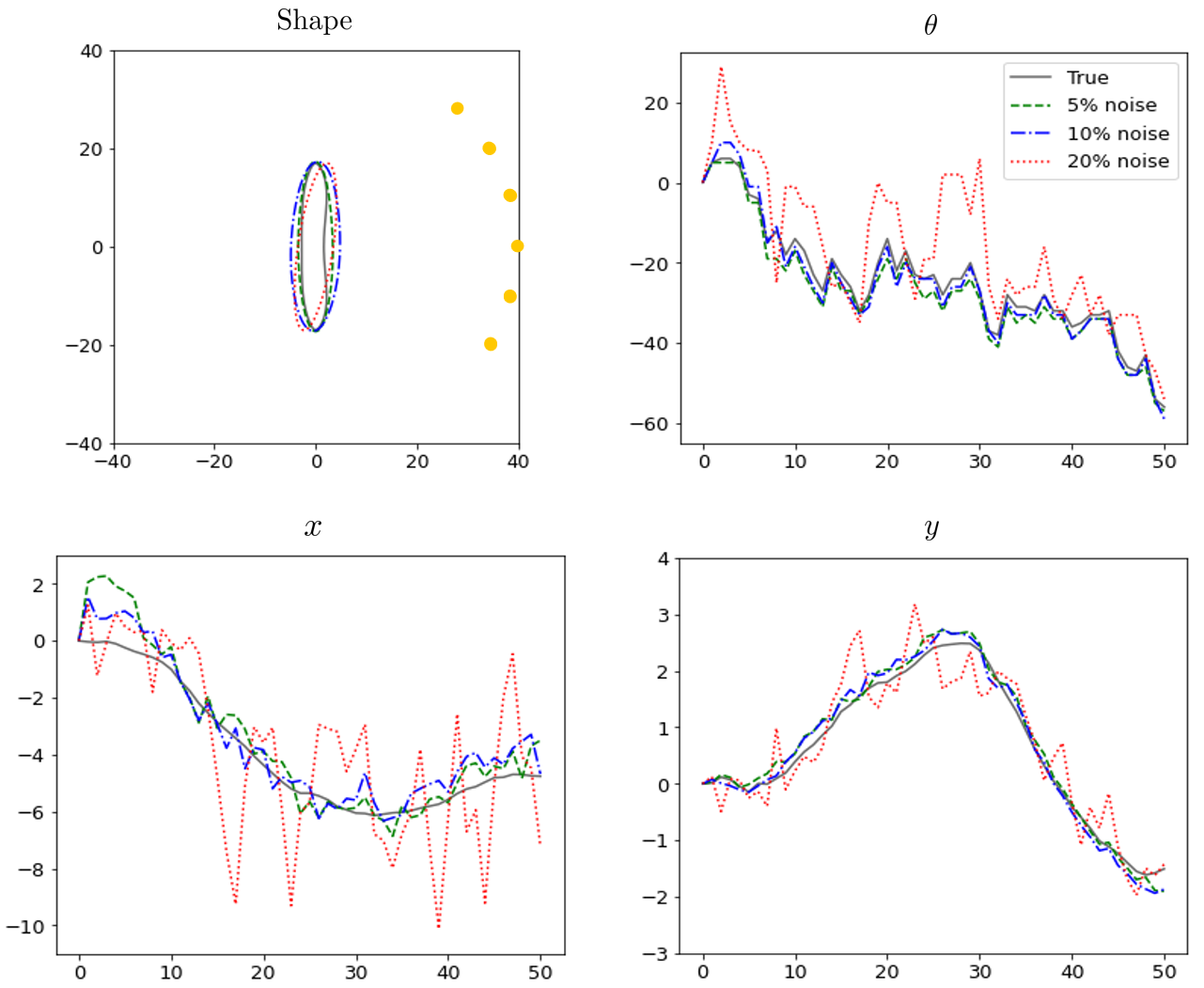}  
			\caption{\label{fig:quarter:result:unknown} Tracking results for an unknown shape with various noise levels in the quarter-view setting. The shape identification is shown in the top-left plot. Tracking results for the $x$-position (bottom-left), $y$-position (bottom-right), and orientation angle (top-right) are presented. The yellow marks indicate the measurement directions.}
		\end{figure} 

\subsubsection{Tracking in limited view with a known shape}	
	Finally, we consider the scenario where the measurements are limited to a quarter of the full view, but the shape of the target is known. In this case, the target shape is more intricate than in the previous simulations, yet both the location and angle tracking exhibit satisfactory performance. As observed consistently throughout the experiments, large deviations from the true values occur when the noise levels are higher.
		
		\begin{figure}[h!] 
			\centering
			\includegraphics[width=0.9\textwidth]{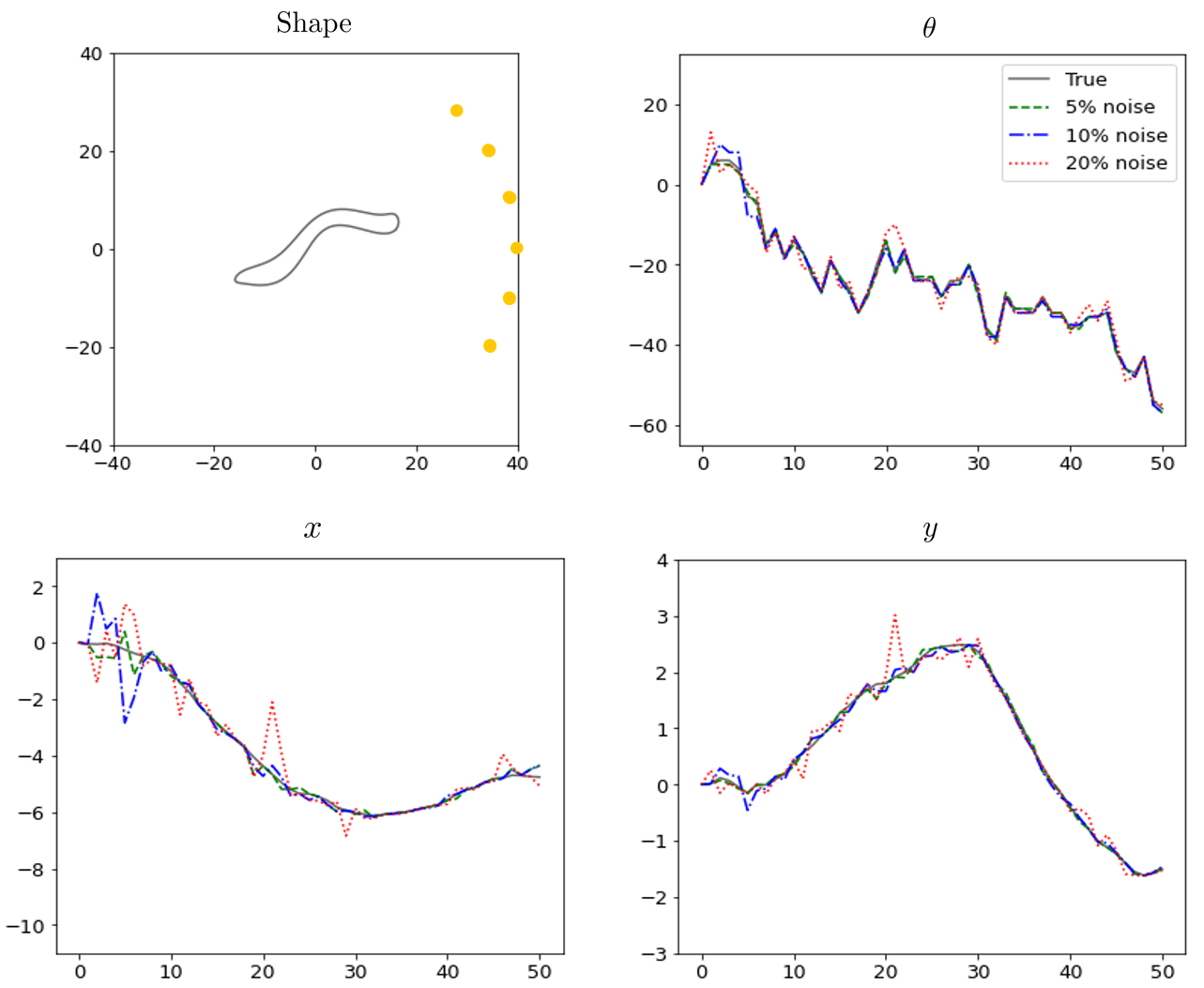}  
			\caption{\label{fig:limited:result:unknown} Tracking results for an unknown shape with various noise levels in the quarter-view setting. The shape identification is shown in the top-left plot. Tracking results for the $x$-position (bottom-left), $y$-position (bottom-right), and orientation angle (top-right) are presented. The yellow marks indicate the measurement directions.}
		\end{figure}

\section{Conclusion} \label{sec:fin}
We developed a tracking algorithm to determine the location and orientation of a moving scatterer from its far-field data, utilizing Bayesian optimization combined with the analytic properties of far-field data. To identify the shape of the target, we train a model using the data only from the initial step.
Numerical simulations under various measurement configurations demonstrated the robustness and effectiveness of the proposed method. Notably, when the target's shape is known or accurately reconstructed, the algorithm achieves high accuracy in tracking even with limited and noisy data. 
Our future work will focus on extending these results to three-dimensional problems and improving the optimization framework.

\begin{appendices}
\section{Generation of random shapes as perturbed ellipses}
Consider the ellipse perturbation in the form of \eqref{eqn:bd}. As we assume that the domain contains the origin, we simply set $e_0=0$. 
We discuss how to handle the shape generation based on the perturbed ellipse expression \eqref{eqn:bd}. We note that arbitrary choices for the parameters in \eqref{eqn:bd} may result intersecting curves or fail to enclose the origin in its interior.

	Recall that $w=re^{i\theta}=r\cos\theta + ir\sin\theta$. 
	 If we let $e_1=\xi+i\eta$, then the base ellipse $w+\frac{e_1}{w}$ has the parametrization:
	 	\begin{align*}
	\left[\left(r+\frac{\xi}{r} \right)\cos\theta + \frac{\eta}{r} \sin\theta \right] + \left[\left(r-\frac{\xi}{r} \right)\sin\theta + \frac{\eta}{r} \cos\theta\right]i, \qquad 0\leq \theta < 2\pi.
	\end{align*}
	For this ellipse, the squared distance of its boundary point from the origin is given by
	\begin{align*}
	r^2+\frac{\xi^2 + \eta^2}{r^2} + 2\xi\cos{2\theta} + 2\eta \sin{2\theta}.
	\end{align*}
Hence, the distance between the origin and the boundary of the ellipse has a lower bound
	\begin{align}\label{eqn:distance1}
	r-\frac{\sqrt{\xi^2+\eta^2}}{r}.
	\end{align}
	Likewise, we see that the perturbed term is bounded as follows:
	\begin{align}\label{eqn:distance2}
\eps\left|\Big(w-\frac{e_1}{w} \Big) \cdot 2\text{Re}\bigg(\sum_{n=0}^{N-1} f_ne^{in \theta} \bigg)\right|
\leq 2N\eps \|{f}_n\|_{\infty } \bigg(r+\frac{\sqrt{\xi^2+\eta^2}}{r}\bigg).
	\end{align}
For data generation, we impose a constraint on $f_n$ such that the lower bound in \eqref{eqn:distance1} is greater than the upper bound in \eqref{eqn:distance2}, i.e.,
	\begin{align} \label{ineq:cond}
	\|{f}_n\|_\infty < \frac{r^2-\sqrt{\xi^2+\eta^2}}{r^2+\sqrt{\xi^2+\eta^2}} \cdot \frac{1}{2N \eps}.
	\end{align}
This ensures that the origin is enclosed by the curve generated by \eqref{eqn:bd}. Specifically, by taking $\|f_n\|_\infty \leq 1$, and $\eps=0.01$, we have $2N\|{f}_n\|_{\infty }\eps \leq 0.1$. Therefore, by selecting a sufficiently large $r$ with an appropriate $e_1$ and $f_n$ so that \eqref{ineq:cond} is satisfied, we obtain a desired type of the boundary. 

\end{appendices}


\bibliographystyle{amsplain} 

\begin{thebibliography}{10}
	
	\bibitem{Abubakar:2002:IBD}
	Aria Abubakar, Peter~M Van~den Berg, and Jordi~J Mallorqui, \emph{Imaging of
		biomedical data using a multiplicative regularized contrast source inversion
		method}, IEEE Transactions on Microwave Theory and Techniques \textbf{50}
	(2002), no.~7, 1761--1771.
	
	\bibitem{Alouges:2018:FBS}
	Fran\c{c}ois Alouges and Matthieu Aussal, \emph{Fem and {BEM} simulations with
		the {Gypsilab} framework}, The SMAI Journal of computational mathematics
	\textbf{4} (2018), 297--318.
	
	\bibitem{Ammari:2013:TMT}
	Habib Ammari, Thomas Boulier, Josselin Garnier, Hyeonbae Kang, and Han Wang,
	\emph{Tracking of a mobile target using generalized polarization tensors},
	SIAM Journal on Imaging Sciences \textbf{6} (2013), no.~3, 1477--1498.
	
	\bibitem{Ammari:2014:GPT}
	Habib Ammari, Josselin Garnier, Hyeonbae Kang, Mikyoung Lim, and Sanghyeon Yu,
	\emph{Generalized polarization tensors for shape description}, Numer. Math.
	\textbf{126} (2014), no.~2, 199--224.
	
	\bibitem{Barron:1993:UAB}
	Andrew~R. Barron, \emph{Universal approximation bounds for superpositions of a
		sigmoidal function}, IEEE Trans. Inform. Theory \textbf{39} (1993), no.~3,
	930--945.
	
	\bibitem{Barron:1994:AEB}
	Andrew~R Barron, \emph{Approximation and estimation bounds for artificial
		neural networks}, Machine learning \textbf{14} (1994), 115--133.
	
	\bibitem{Brochu:2010:TBO}
	Eric Brochu, Vlad~M Cora, and Nando De~Freitas, \emph{A tutorial on bayesian
		optimization of expensive cost functions, with application to active user
		modeling and hierarchical reinforcement learning}, arXiv preprint
	arXiv:1012.2599 (2010).
	
	\bibitem{Chen:2020:ROD}
	Xudong Chen, Zhun Wei, Maokun Li, and Paolo Rocca, \emph{A review of deep
		learning approaches for inverse scattering problems}, Electromagnetic Waves
	\textbf{167} (2020), 67--81.
	
	\bibitem{Cheney:2008:IMT}
	Margaret Cheney and Brett Borden, \emph{Imaging moving targets from scattered
		waves}, Inverse problems \textbf{24} (2008), no.~3, 035005.
	
	\bibitem{Choi:2021:ASR}
	Doosung Choi, Junbeom Kim, and Mikyoung Lim, \emph{Analytical shape recovery of
		a conductivity inclusion based on {F}aber polynomials}, Math. Ann.
	\textbf{381} (2021), no.~3-4, 1837--1867.
	
	\bibitem{Colton:1998:IAE}
	David~L Colton and Rainer Kress, \emph{Inverse acoustic and electromagnetic
		scattering theory}, vol.~93, Springer, 1998.
	
	\bibitem{Cox:1992:SMG}
	Dennis~D Cox and Susan John, \emph{A statistical method for global
		optimization}, [Proceedings] 1992 IEEE international conference on systems,
	man, and cybernetics, IEEE, 1992, pp.~1241--1246.
	
	\bibitem{Cox:1997:SDO}
	Dennis~D. Cox and Susan John, \emph{S{DO}: a statistical method for global
		optimization}, Multidisciplinary design optimization ({H}ampton, {VA}, 1995),
	SIAM, Philadelphia, PA, 1997, pp.~315--329. \MR{1438321}
	
	\bibitem{Cybenko:1989:ASS}
	G.~Cybenko, \emph{Approximation by superpositions of a sigmoidal function},
	Math. Control Signals Systems \textbf{2} (1989), no.~4, 303--314.
	
	\bibitem{Gao:2022:ANN}
	Yu~Gao, Hongyu Liu, Xianchao Wang, and Kai Zhang, \emph{On an artificial neural
		network for inverse scattering problems}, J. Comput. Phys. \textbf{448}
	(2022), Paper No. 110771, 15.
	
	\bibitem{Guo:2022:PED}
	Rui Guo, Zhichao Lin, Tao Shan, Xiaoqian Song, Maokun Li, Fan Yang, Shenheng
	Xu, and Aria Abubakar, \emph{Physics embedded deep neural network for solving
		full-wave inverse scattering problems}, IEEE Transactions on Antennas and
	Propagation \textbf{70} (2022), no.~8, 6148--6159.
	
	\bibitem{Hammerschmidt:2018:SIP}
	Martin Hammerschmidt, Philipp-Immanuel Schneider, Xavier~Garcia Santiago, Lin
	Zschiedrich, Martin Weiser, and Sven Burger, \emph{Solving inverse problems
		appearing in design and metrology of diffractive optical elements by using
		bayesian optimization}, Computational Optics II, vol. 10694, International
	Society for Optics and Photonics, SPIE, 2018, p.~1069407.
	
	\bibitem{Haworth:2007:DTM}
	Christopher~D Haworth, Yves De~Saint-Pern, D~Clark, Emanuele Trucco, and Yvan~R
	Petillot, \emph{Detection and tracking of multiple metallic objects in
		millimetre-wave images}, International Journal of Computer Vision \textbf{71}
	(2007), no.~2, 183--196.
	
	\bibitem{Hornik:1994:DAR}
	Kurt Hornik, Maxwell Stinchcombe, Halbert White, and Peter Auer, \emph{Degree
		of approximation results for feedforward networks approximating unknown
		mappings and their derivatives}, Neural Computation \textbf{6} (1994), no.~6,
	1262--1275.
	
	\bibitem{Huang:2021:BOF}
	Chaofan Huang, Yi~Ren, Emily~K. McGuinness, Mark~D. Losego, Ryan~P. Lively, and
	V.~Roshan Joseph, \emph{Bayesian optimization of functional output in inverse
		problems}, Optim. Eng. \textbf{22} (2021), no.~4, 2553--2574.
	
	\bibitem{Huang:2022:BAI}
	Jiangfeng Huang, Zhiliang Deng, and Liwei Xu, \emph{Bayesian approach for
		inverse interior scattering problems with limited aperture}, Appl. Anal.
	\textbf{101} (2022), no.~4, 1491--1504.
	
	\bibitem{Jones:1998:EGO}
	Donald~R. Jones, Matthias Schonlau, and William~J. Welch, \emph{Efficient
		global optimization of expensive black-box functions}, vol.~13, 1998,
	Workshop on Global Optimization (Trier, 1997), pp.~455--492.
	
	\bibitem{Kang:2022:MSM}
	Sangwoo Kang and Mikyoung Lim, \emph{Monostatic sampling methods in
		limited-aperture configuration}, Appl. Math. Comput. \textbf{427} (2022),
	Paper No. 127170, 15. \MR{4411533}
	
	\bibitem{Kang:2022:FIS}
	Sangwoo Kang, Mikyoung Lim, and Won-Kwang Park, \emph{Fast identification of
		short, linear perfectly conducting cracks in a bistatic measurement
		configuration}, Journal of Computational Physics \textbf{468} (2022), 111479.
	
	\bibitem{Kingma:2014:AMS}
	DP~Kingma and Jimmy Ba, \emph{Adam: A method for stochastic optimization}, 3rd
	International Conference on Learning Representations, {ICLR} 2015, San Diego,
	CA, USA, May 7-9, 2015, Conference Track Proceedings, 2015.
	
	\bibitem{Klambauer:2017:SNN}
	G\"{u}nter Klambauer, Thomas Unterthiner, Andreas Mayr, and Sepp Hochreiter,
	\emph{Self-normalizing neural networks}, Advances in Neural Information
	Processing Systems (I.~Guyon, U.~Von Luxburg, S.~Bengio, H.~Wallach,
	R.~Fergus, S.~Vishwanathan, and R.~Garnett, eds.), vol.~30, Curran
	Associates, Inc., 2017.
	
	\bibitem{Kress:1985:MCN}
	R.~Kress, \emph{Minimizing the condition number of boundary integral operators
		in acoustic and electromagnetic scattering}, Quart. J. Mech. Appl. Math.
	\textbf{38} (1985), no.~2, 323--341.
	
	\bibitem{Kushner:1964:NML}
	Harold~J Kushner, \emph{A new method of locating the maximum point of an
		arbitrary multipeak curve in the presence of noise}, Journal of Basic
	Engineering \textbf{86} (1964), no.~1, 97--106.
	
	\bibitem{Leon:2021:RTT}
	Florin Leon and Marius Gavrilescu, \emph{A review of tracking and trajectory
		prediction methods for autonomous driving}, Mathematics \textbf{9} (2021),
	no.~6, 660.
	
	\bibitem{Lizotte:2008:PBO}
	Daniel~James Lizotte, \emph{Practical bayesian optimization}, Phd thesis,
	University of Alberta, Edmonton, Alberta, Canada, 2008.
	
	\bibitem{Lu:2020:DRI}
	Lu~Lu, Yeonjong Shin, Yanhui Su, and George Em~Karniadakis, \emph{Dying relu
		and initialization: Theory and numerical examples}, Communications in
	Computational Physics \textbf{28} (2020), no.~5, 1671--1706.
	
	\bibitem{Luke:2003:NRT}
	D.~Russell Luke and Roland Potthast, \emph{The no response test---a sampling
		method for inverse scattering problems}, SIAM J. Appl. Math. \textbf{63}
	(2003), no.~4, 1292--1312.
	
	\bibitem{Marklein:2006:ISI}
	Ren{\'e} Marklein, Jinghong Miao, Mehbub-ur Rahman, and Karl~J{\"o}rg
	Langenberg, \emph{Inverse scattering and imaging in ndt: recent applications
		and advances}, Proceedings of the European Conference on Non-Destructive
	Testing (ECNDT’06), 2006.
	
	\bibitem{Mockus:1994:ABA}
	Jonas Mo\v{c}kus, \emph{Application of {B}ayesian approach to numerical methods
		of global and stochastic optimization}, J. Global Optim. \textbf{4} (1994),
	no.~4, 347--365.
	
	\bibitem{Mockus:1978:TGO}
	Jonas Mo\v{c}kus, V~Tiesis, and Antanas \v{Z}ilinaskas, \emph{Towards global
		optimisation. 2}, North-Holland Publishing Co., Amsterdam-New York, 1978,
	pp.~117--128. \MR{522648}
	
	\bibitem{Nogueira:2014:BOO}
	Fernando Nogueira, \emph{{Bayesian Optimization}: Open source constrained
		global optimization tool for {Python}}, 2014.
	
	\bibitem{O'Hagan:1978:CFO}
	A.~O'Hagan, \emph{Curve fitting and optimal design for prediction}, J. Roy.
	Statist. Soc. Ser. B \textbf{40} (1978), no.~1, 1--42.
	
	\bibitem{Rasamoelina:2020:RAF}
	Andrinandrasana~David Rasamoelina, Fouzia Adjailia, and Peter Sinčák, \emph{A
		review of activation function for artificial neural network}, 2020 IEEE 18th
	World Symposium on Applied Machine Intelligence and Informatics (SAMI), 2020,
	pp.~281--286.
	
	\bibitem{Robin:2016:MRT}
	Cyril Robin and Simon Lacroix, \emph{Multi-robot target detection and tracking:
		taxonomy and survey}, Autonomous Robots \textbf{40} (2016), 729--760.
	
	\bibitem{Salucci:2016:RTN}
	Marco Salucci, Nicola Anselmi, Giacomo Oliveri, Pierre Calmon, Roberto
	Miorelli, Christophe Reboud, and Andrea Massa, \emph{Real-time ndt-nde
		through an innovative adaptive partial least squares svr inversion approach},
	IEEE Transactions on Geoscience and Remote Sensing \textbf{54} (2016),
	no.~11, 6818--6832.
	
	\bibitem{Siegel:2020:ARN}
	Jonathan~W. Siegel and Jinchao Xu, \emph{Approximation rates for neural
		networks with general activation functions}, Neural Networks \textbf{128}
	(2020), 313--321.
	
	\bibitem{Srinivas:2009:GPO}
	Niranjan Srinivas, Andreas Krause, Sham~M Kakade, and Matthias Seeger,
	\emph{Gaussian process optimization in the bandit setting: No regret and
		experimental design}, arXiv preprint arXiv:0912.3995 (2009).
	
	\bibitem{Stander:2002:RSD}
	Nielen Stander and Kenneth Craig, \emph{On the robustness of a simple domain
		reduction scheme for simulation-based optimization}, International Journal
	for Computer-Aided Engineering and Software (Eng. Comput.) \textbf{19}
	(2002).
	
	\bibitem{Vargas-Hernandes:2019:BOI}
	R~A Vargas-Hern{\'a}ndez, Y~Guan, D~H Zhang, and R~V Krems, \emph{Bayesian
		optimization for the inverse scattering problem in quantum reaction
		dynamics}, New Journal of Physics \textbf{21} (2019), no.~2, 022001.
	
	\bibitem{Wang:2012:MRI}
	Ling Wang, Margaret Cheney, and Brett Borden, \emph{Multistatic radar imaging
		of moving targets}, IEEE Transactions on Aerospace and Electronic Systems
	\textbf{48} (2012), no.~1, 230--242.
	
	\bibitem{Wang:2023:LMS}
	S\'ara Wang, Mirza Karamehmedovi\'c, and Faouzi Triki, \emph{Localization of
		moving sources: uniqueness, stability, and {B}ayesian inference}, SIAM J.
	Appl. Math. \textbf{83} (2023), no.~3, 1049--1073.
	
	\bibitem{Woodhouse:2017:IMR}
	Iain~H Woodhouse, \emph{Introduction to microwave remote sensing}, CRC Press,
	2006.
	
\end{thebibliography}
\providecommand{\bysame}{\leavevmode\hbox to3em{\hrulefill}\thinspace}
\providecommand{\MR}{\relax\ifhmode\unskip\space\fi MR }

\providecommand{\MRhref}[2]{%
	\href{http://www.ams.org/mathscinet-getitem?mr=#1}{#2}
}
\providecommand{\href}[2]{#2}

\end{document}